\documentclass[11pt]{amsproc}
\usepackage[T1]{fontenc}
\usepackage[letterpaper,left=1.40in,right=1.40in,top=1.25in,bottom=1.25in]{geometry}
\usepackage{amsmath,amssymb,mathtools,mathrsfs}
\usepackage{enumitem}
\usepackage{microtype}
\usepackage[hidelinks]{hyperref}
\usepackage[nameinlink,noabbrev,capitalize]{cleveref}
\usepackage[dvipsnames]{xcolor} % Fornisce nomi di colori aggiuntivi tra cui ForestGreen

\usepackage{comment}
\usepackage[normalem]{ulem}
\definecolor{cz}{HTML}{1E824C}
\hypersetup{
  pdftitle={Construction of Regular Fat Linear Sets and Non-Representable q-Matroids},
  pdfauthor={Yufei Qiu, Valentino Smaldore, Zhenpeng Wang, Corrado Zanella,
and Ferdinando Zullo},
  pdfsubject={Finite geometry and regular fat linear sets},
  pdfkeywords={regular fat linear set, scalar stabilizer, finite projective geometry, q-matroid}
}

\numberwithin{equation}{section}
\allowdisplaybreaks

\newtheorem{theorem}{Theorem}[section]
\newtheorem{proposition}[theorem]{Proposition}
\newtheorem{lemma}[theorem]{Lemma}
\newtheorem{corollary}[theorem]{Corollary}
\theoremstyle{definition}
\newtheorem{definition}[theorem]{Definition}
\newtheorem{problem}[theorem]{Problem}
\theoremstyle{remark}
\newtheorem{remark}[theorem]{Remark}

\newcommand{\F}{\mathbb F}
\newcommand{\PG}{\operatorname{PG}}
\newcommand{\GL}{\operatorname{GL}}

\newcommand{\GammaL}{\operatorname{\Gamma L}}

\newcommand{\Aut}{\operatorname{Aut}}
\newcommand{\Span}{\operatorname{span}}

\newcommand{\supp}{\operatorname{supp}}

\newcommand{\Stab}{\operatorname{Stab}}
\newcommand{\qnum}[2]{\left[#1\right]_{#2}}

\newcommand{\card}[1]{\lvert #1\rvert}
\newcommand{\set}[1]{\left\{#1\right\}}
\newcommand{\angles}[1]{\left\langle #1\right\rangle}

\newcommand{\fq}{\mathbb F_q}
\newcommand{\cF}{\mathcal F}

\newcommand{\fqn}{\mathbb F_{q^n}}

\title[Construction of Regular Fat Linear Sets]{Construction of Regular Fat Linear Sets and Non-Representable \texorpdfstring{$q$}{q}-Matroids}

\author[Y. Qiu]{Yufei Qiu}
\address{\textbf{Yufei Qiu}\\
Department of Mathematics, The University of Hong Kong, Hong Kong SAR, China}
\email{u3612772@connect.hku.hk}

\author[V. Smaldore]{Valentino Smaldore}
\address{\textbf{Valentino Smaldore}\\
Dipartimento di Tecnica e Gestione dei Sistemi Industriali, Università di Padova, Via S. Nicola 3, 35100 Vicenza, Italy}
\email{valentino.smaldore@unipd.it}

\author[Z. Wang]{Zhenpeng Wang}
\address{\textbf{Zhenpeng Wang}\\
Department of Mathematics, The University of Hong Kong, Hong Kong SAR, China}
\email{u3011717@connect.hku.hk}

\author[C. Zanella]{Corrado Zanella}
\address{\textbf{Corrado Zanella}\\
Dipartimento di Tecnica e Gestione dei Sistemi Industriali, Università di Padova, Via S. Nicola 3, 35100 Vicenza, Italy}
\email{corrado.zanella@unipd.it}

\author[F. Zullo]{Ferdinando Zullo}
\address{\textbf{Ferdinando Zullo}\\
Dipartimento di Matematica e Fisica, Università degli Studi della Campania ``Luigi Vanvitelli'', Viale Lincoln 5, 81100 Caserta, Italy}
\email{ferdinando.zullo@unicampania.it}

\date{today}

\subjclass[2020]{Primary 51E20; Secondary 05B35, 11T71, 94B05}
\keywords{regular fat linear set, point weight, scalar stabilizer, Sidon space, finite projective geometry, cyclic subspace code, q-matroid}

\begin{document}

\begin{abstract}
Let $q$ be a prime power. 
An $(r, i)$-regular fat $\fq$-linear set has exactly $r$ points of weight $i\ge2$, called fat points. 
All of its remaining points have weight one, and at least one such point occurs. 
We solve two open problems posed by three of the authors of this paper.

First, we derive divisibility constraints on the point weights using the scalar stabilizer of the defining subspace. 
Consequently, a regular fat linear set of rank $2i$ in $\PG(1,q^n)$ has at most two or exactly $q + 1$ fat points. 
In the latter case, the fat points form an $\fq$-subline.

Second, we construct higher-dimensional regular fat linear sets whose
fat points do not form an $\F_q$-subgeometry. In particular, for every
$k\geq 3$ we obtain examples of rank $2k$ whose fat locus consists of an
$\F_q$-subgeometry $\PG(k-2,q)$ in $\PG(k-1,q^4)$ together with one external point.
This construction is then generalized further.
Finally, we apply our projective-line results to $q$-matroid representability and present an infinite family of non-representable rank-two $q$-matroids arising from partial spreads.
\end{abstract}

\maketitle

\section{Introduction}

The point-weight distribution is one of the basic vector invariants of an $\F_q$-linear set and has played a central role in several problems in finite geometry and coding theory.  At one extreme, scattered linear sets, whose points all have weight one, have been intensively studied
since the work of Blokhuis and Lavrauw \cite{BL00}, also because of
their connections with rank-metric codes \cite{PZ20,NZ26}.  Allowing a small number of
points of larger weight leads to other well-studied classes.  Linear
sets with a unique point of weight greater than one are the so-called clubs \cite{FS06,LZ15}, while linear sets with two points of complementary weights were studied in \cite{NPSZ22}.  More recently, these configurations have also appeared naturally in questions on $q$-matroid representability:
clubs are related to free products of rank-one uniform $q$-matroids
\cite{ABF25}, whereas linear sets with two high-weight points arise in the study of direct sums of uniform $q$-matroids \cite{AJNZ26}.  

Regular fat linear sets, introduced in \cite{SZZ26} arise as natural extensions of the above linear sets.  An $(r,i)$-regular fat linear set has
exactly $r$ points of a fixed weight $i\geq2$, while all its remaining
points have weight one, with at least one point of weight one occurring.
In other words, these linear sets are scattered apart from a set of $r$ exceptional points.  They simultaneously include scattered linear sets when $r=0$, clubs when $r=1$, and, in the rank-$2i$ case, the two-fat-point configurations when $r=2$, as  considered in
\cite{NPSZ22}.

Let $q$ be a prime power, and let $U$ be an $\F_q$-subspace of $\F_{q^n}^k$.  The associated linear set is
\[
 L_U=\set{\angles{u}_{\fqn}:u\in U\setminus\set{0}}\subseteq \PG(k-1,q^n).
\]
For a projective point $P=\angles{v}_{\F_{q^n}}$, its weight is
\[
 w_U(P)=\dim_{\F_q}(U\cap \angles{v}_{\F_{q^n}}).
\]
Hence, \emph{regular fat linear sets} are linear sets in which the positive point weights take the two values $1$ and $i$, the latter on exactly $r$ points, see \cite{SZZ26}.  
The authors of this paper, in Proposition 6.1, state that, if $r>2$, a $(r,i)$-regular fat linear set of rank $2i$ in $\PG(1,q^n)$ has $q^j+1$ points of weight $i$, $j|n$. The authors, in the final section, ask whether there exist examples of greater cardinality $(j>1)$, and whether higher-dimensional examples can have a number of fat points strictly between the size of a projective basis and that of an $\F_q$-subgeometry.
See Problems~\ref{prob:szz1} and \ref{prob:higher} below.

The first problem turns out to be related to the scalar stabilizer of the defining subspace.
For an $\F_q$-subspace $U\leq \F_{q^n}^e$, $e\in\mathbb Z_{>0}$, define the \emph{scalar stabilizer} of $U$ as
\[
S(U)=\set{a\in \F_{q^n}:aU\subseteq U}.
\]
This is a subfield $\F_{q^j}$ of $\F_{q^n}$. Moreover, every intersection
of $U$ with an $\F_{q^n}$-linear subspace is an $\F_{q^j}$-vector space.
Consequently, $j$ divides every point weight. On the projective line,
the presence of three fat points forces, up to $\GL(2,q^n)$-equivalence,
the product form
$
U=T\times T$.
If $S(T)=\F_{q^j}$, then the fat points form an $\F_{q^j}$-subline.
Since a regular fat linear set has at least one point of weight one,
the divisibility condition forces $j=1$. Hence a rank-$2i$ regular fat
linear set in $\PG(1,q^n)$ has at most two fat points or exactly $q+1$
fat points, and in the latter case they form an $\F_q$-subline.

For the second problem, we give an explicit higher-dimensional
construction.
For a primitive element $\theta\in\F_{q^4}$, let
\[
T=\angles{1,\theta}_{\F_q},
\qquad
R=\angles{1,\theta^2}_{\F_q}.
\]
Spaces $T$ and $R$ have $\F_q$ as a scalar stabilizer and are not scalar-equivalent. Every relevant intersection of scalar translates has a dimension of at most one. Therefore,
\[
U=T^{k-1}\times R\leq \F_{q^4}^{\,k}
\]
defines a regular fat linear set in $\PG(k-1,q^4)$ for every $k \geq 3$. Its fat locus is the disjoint union of an $\F_q$-subgeometry, $\PG(k-2,q)$, contained in a hyperplane and one point outside the hyperplane. Therefore, Problem 2.5 is answered in the affirmative. For $q \geq 3$, combining the same idea with the multi-orbit construction of \cite{Zul23} yields analogous examples with arbitrary fat weight $i \geq 2$.

The above construction is part of a more general block-product
approach. Let $T_1,\ldots,T_c\leq\F_{q^n}$ be pairwise
scalar-inequivalent $i$-dimensional subspaces, occurring with
multiplicities $m_1,\ldots,m_c$. Under suitable one-dimensional
intersection conditions among their scalar translates, the product
\[
U=T_1^{m_1}\times\cdots\times T_c^{m_c}
\]
defines a regular fat linear set. Its fat locus is a disjoint union of
subgeometries, one for each block. Conversely, under the natural
spanning hypothesis, every regular fat defining subspace of rank $ki$
arises from such a block decomposition.
Finally, we apply our projective-line results to $q$-matroid
representability. Starting from an partial $i$-spread
$\mathcal V=\{V_1,\ldots,V_s\}$ of $\F_q^{2i}$, we associate with
$\mathcal V$ a natural loopless rank-two $q$-matroid
$\mathcal M_{\mathcal V}$. We show that, for
$
3\leq s<q^i+1$,
$s\neq q+1$,
these $q$-matroids are not representable over any finite extension of
$\F_q$. 
Additionally, we prove that for $q>2$ and $s=q+1$ nonrepresentable $q$-matroids do exist. 
This provides an infinite family of nonrepresentable rank-two
$q$-matroids. In the particular case $q=2$, $i=2$, and $s=4$, our
construction recovers the $q$-matroid of Ceria and Jurrius
\cite{CJ24}, which they proved to be the smallest nonrepresentable
$q$-matroid.

\bigskip

The paper is organized as follows. In Section~\ref{sec:RFLS} we define regular fat linear sets and describe two problems about them. In Section~\ref{sec:SS} we study scalar stabilizers and their role in the point-weight distribution, and we obtain the classification of rank-$2i$ $(r,i)$-regular fat linear sets on the projective line for $r\ge2$. In Section~\ref{sec:ABP} we introduce admissible block products, and construct regular fat linear sets arising from them (\cref{thm:block-construction}).
Section~\ref{sec:struc} is devoted to the structural constraints satisfied by this construction. In Section~\ref{sec:fam}, we exhibit explicit families of regular fat linear sets in  higher dimension, which, in particular, provide an affirmative answer to the second open problem of \cite{SZZ26}. In Section~\ref{sec:unique} we prove the converse of \cref{thm:block-construction}: under a natural spanning assumption, some regular fat linear sets arise from admissible block products. Their semilinear equivalence and stabilizers are studied in Section~\ref{sec:semil}. Finally, in Section~\ref{sec:q_mat} we apply the results on the projective line to the representability of rank-two $q$-matroids arising from partial spreads, obtaining infinite families of nonrepresentable $q$-matroids.

\section{Regular fat linear sets}\label{sec:RFLS}

Throughout the paper, $q=p^h$ is a prime power. Dimensions without an indicated ground field are taken over the finite field $\F_q$.

We begin with the following definition from \cite{SZZ26}.

\begin{definition}
Let $i\geq 2$ and $r\geq 0$.  The linear set $L_U$ is $(r,i)$-regular fat if the following conditions hold:
\begin{enumerate}[label=\textup{(\roman*)}]
\item exactly $r$ points have weight $i$;
\item every other point of $L_U$ has weight one;
\item at least one point of weight one occurs.
\end{enumerate}
The points of weight $i$ are called \emph{fat}, and we will denote the set of fat points as $\cF(U)$.
\end{definition}

The last condition excludes subfield-linear configurations of constant weight larger than one. 
As can be shown starting from the equations in \cite[Proposition 2.2]{Pol10}, the number of points of an $(r, i)$-regular fat linear set depends only on $r$, $i$, and the rank $\rho$.

\begin{proposition}\label{prop:size}\cite[Proposition 2.3]{SZZ26}
Let $L_U$ be $(r,i)$-regular fat and put $\rho=\dim U$.  Then
\begin{equation}\label{eq:size}
 \card{L_U}=\frac{q^\rho-1-r(q^i-q)}{q-1}.
\end{equation}
\end{proposition}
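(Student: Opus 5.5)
The plan is a double count of the nonzero vectors of $U$, sorted according to the projective point they span. Each nonzero $u\in U$ spans exactly one point $P=\angles{u}_{\fqn}$ of $L_U$, and $u$ lies in $U\cap\angles{u}_{\fqn}$. So the sets $(U\cap P)\setminus\set{0}$, with $P$ running over the points of $L_U$, partition $U\setminus\set{0}$. A point of weight $w$ contributes $q^w-1$ vectors, which gives
\[
q^\rho-1=\sum_{P\in L_U}\bigl(q^{w_U(P)}-1\bigr).
\]
This is exactly the identity behind \cite[Proposition 2.2]{Pol10}, specialised to our situation.

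Next, substitute the weight distribution coming from the definition of $(r,i)$-regular fat. There are exactly $r$ points of weight $i$, and the remaining $\card{L_U}-r$ points all have weight one. The identity therefore becomes
\[
q^\rho-1=r(q^i-1)+\bigl(\card{L_U}-r\bigr)(q-1).
\]
Solving for $\card{L_U}$ gives $(q-1)\card{L_U}=q^\rho-1-r(q^i-1)+r(q-1)=q^\rho-1-r(q^i-q)$. This is \eqref{eq:size}.

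There is no real obstacle here. The only point needing care is that every point of $L_U$ has weight at least one, so each point is counted; this holds by the definition of $L_U$. Condition (iii) of the definition is not used in the count: it only guarantees $\card{L_U}>r$.
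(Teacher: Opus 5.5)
Your proof is correct and follows the route the paper indicates (the paper cites \cite[Proposition 2.3]{SZZ26} and gives no proof of its own, only noting that the count follows from \cite[Proposition 2.2]{Pol10}). That route is the identity $\sum_{P\in L_U}\bigl(q^{w_U(P)}-1\bigr)=q^\rho-1$, specialised to $r$ points of weight $i$ and all remaining points of weight one, and your algebra and your remark that condition (iii) is not needed are both right.
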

Weights are unchanged by applying  semilinear isomorphisms on $U$. 
\begin{lemma}\label{lem:projective-invariance}
Let $A\in\GammaL(k,q^n)$ and put $U'=A(U)$, and let $\overline A$ be the projective collineation of $PG(k-1,q^n)$ induced by $A$.
Then
\[
 L_{U'}=\overline A(L_U),
 \qquad
 w_{U'}(\overline A P)=w_U(P)
\]
for every $P\in\PG(k-1,q^n)$.
\end{lemma}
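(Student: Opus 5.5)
The proof comes down to unwinding the definitions. An element $A\in\GammaL(k,q^n)$ acts as $A(x)=M x^{\sigma}$, where $M\in\GL(k,q^n)$ and $\sigma\in\Aut(\F_{q^n})$ is applied coordinatewise. The key observation is that $\sigma$ fixes $\F_q$ pointwise: it is a power of the Frobenius $x\mapsto x^p$ composed appropriately, and it fixes $\F_p$. Because the linear sets here are $\F_q$-linear, I would restrict to the subgroup of automorphisms of $\F_{q^n}$ fixing $\F_q$. This is the setting in which a semilinear map induces a collineation that preserves $\F_q$-linear sets. With that restriction, $A$ is $\F_q$-linear and bijective on $\F_{q^n}^k$, so $U'=A(U)$ is an $\F_q$-subspace with $\dim_{\F_q}U'=\dim_{\F_q}U$. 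Moreover, $A(\lambda v)=\lambda^{\sigma}A(v)$ for $\lambda\in\F_{q^n}$.

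\textbf{Step 1: $A$ maps $\F_{q^n}$-subspaces to $\F_{q^n}$-subspaces.} From $A(\lambda v)=\lambda^{\sigma}A(v)$ and the bijectivity of $\sigma$ on $\F_{q^n}$, we get
\[
A(\angles{v}_{\F_{q^n}})=\angles{A v}_{\F_{q^n}}.
\]
Hence the induced map $\overline A\colon\angles{v}_{\F_{q^n}}\mapsto\angles{Av}_{\F_{q^n}}$ is well defined and bijective on points. This is the projective collineation in the statement.

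\textbf{Step 2: the equality of linear sets.} Let $u\in U\setminus\set{0}$. Then $Au\in U'\setminus\set{0}$ and $\overline A\angles{u}_{\F_{q^n}}=\angles{Au}_{\F_{q^n}}$. Every nonzero element of $U'$ has the form $Au$ with $u\in U\setminus\set{0}$, so the map $u\mapsto Au$ yields $L_{U'}=\overline A(L_U)$.

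\textbf{Step 3: the equality of weights.} Let $P=\angles{v}_{\F_{q^n}}$. Since $A$ is a bijection, it commutes with intersections, and by Step 1
\[
A\bigl(U\cap\angles{v}_{\F_{q^n}}\bigr)=A(U)\cap A(\angles{v}_{\F_{q^n}})=U'\cap\angles{Av}_{\F_{q^n}}.
\]
Because $A$ is an injective $\F_q$-linear map, it preserves $\F_q$-dimension. Therefore $w_{U'}(\overline AP)=w_U(P)$, which holds for every point $P$, including those of weight zero.

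There is no real obstacle in this argument. The only point needing care is the $\F_q$-linearity of $A$, that is, $\sigma$ fixing $\F_q$. This is implicit whenever $\GammaL$ is used for $\F_q$-linear sets. Even without this restriction, $A(U)$ would still be an $\F_{q'}$-space where $q'=q^{\sigma}$-image field is the same set $\F_q$, since $\sigma$ maps $\F_q$ onto itself. The argument then goes through verbatim because $\sigma(\F_q)=\F_q$: an additive bijection that is $\sigma$-semilinear over $\F_q$ still maps $\F_q$-subspaces to $\F_q$-subspaces of the same dimension.
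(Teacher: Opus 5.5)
Your argument is correct, and it is the routine unwinding of definitions that the paper relies on: the paper states this lemma without proof, only remarking that semilinear isomorphisms preserve weights. One correction is needed. Your ``key observation'' that every $\sigma\in\Aut(\fqn)$ fixes $\fq$ pointwise is false when $q=p^h$ with $h>1$, since a power of $x\mapsto x^p$ need only fix $\F_p$. For the same reason you may not restrict to $\Aut(\fqn/\fq)$: $\GammaL(k,q^n)$ uses all of $\Aut(\fqn)$, and the paper uses arbitrary $\sigma$ in \cref{thm:semilinear-equivalence}.

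Lead instead with your closing remark. Since $\sigma(\fq)=\fq$ setwise, $A$ is an additive bijection that is $\sigma|_{\fq}$-semilinear. Hence it maps every $\fq$-subspace $W$ onto an $\fq$-subspace with $\card{A(W)}=\card{W}$, that is, of the same dimension. After that, Steps 1--3 go through verbatim, with ``$\fq$-linear'' in Step 3 replaced by ``$\fq$-semilinear''.
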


Two nonzero $\F_q$-subspaces $A,B\leq \F_{q^n}$ are called scalar-equivalent if $A=\alpha B$ for some $\alpha\in \F_{q^n}^*$.  
For a prime power $q$, we use the notation
\[
 \qnum{m}{q}=\frac{q^m-1}{q-1}.
\]

The points of $\PG(1,q^n)$ are $
 P_\alpha=\langle(1,\alpha)\rangle_{\F_{q^n}}$ $(\alpha\in \F_{q^n})$,
$P_\infty=\langle(0,1)\rangle_{\F_{q^n}}$.
Let $j$ be a divisor of $n$.
A $(d-1)$-dimensional $\F_{q^j}$-\emph{subgeometry} of $\PG(k-1,q^n)$ is any set of points projectively equivalent to
\[
\set{\angles v_{\fqn}:v\in V,\,v\neq0},
\quad\text{where } V=\F_{q^j}^d\times\set0^{k-d}.
\]
Note that what here is subgeometry is called ``canonical subgeometry'' by some authors (e.g.\ \cite{Lun84}).
The canonical $\F_{q^j}$-subline of $\PG(1,q^n)$ is
\[
 \PG(1,q^j)=\set{\langle(1,\lambda)\rangle_{\F_{q^n}}:\lambda\in\F_{q^j}}\cup\set{\langle(0,1)\rangle_{\F_{q^n}}},
\]
where $j$ is a divisor of $n$.
For reference, we state the two questions from \cite{SZZ26} in the form needed below.

\begin{problem}\label{prob:szz1}
Determine whether there exists a $(q^j+1,i)$-regular fat linear set of rank $2i$ in $\PG(1,q^n)$ with $j>1$.
\end{problem}

\begin{problem}\label{prob:higher}
Construct an $(r,i)$-regular fat linear set in $\PG(k-1,q^n)$ for which
$
 k<r<\qnum{k}{q}$,
or for which the set of fat points is not an $\F_q$-subgeometry $\PG(k-1,q)$.
\end{problem}

\section{Scalar stabilizers and point weights}\label{sec:SS}

In this section, we describe the point weights by means of the scalar stabilizer of the defining subspace $U$.

\begin{lemma}\label{l:scalar-divisibility}
Let $0\neq U\leq_{\F_q}\fqn^k$.  There is a divisor $j$ of $n$ such that
$
 S(U)=\F_{q^j}$.
Moreover, $j$ divides $\dim_{\F_q}U$, and for every $\fqn$-linear subspace $W$ of ${\fqn^k}$, the intersection $U\cap W$ is an $\F_{q^j}$-vector space.  In particular,
\begin{equation}\label{eq:weight-div}
 j\mid w_U(P),
 \qquad
 \text{for all } P\in\PG(k-1,q^n).
\end{equation}
\end{lemma}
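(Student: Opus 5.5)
We first show that $S(U)$ is a subfield of $\fqn$ containing $\F_q$. Clearly $\F_q\subseteq S(U)$, since $U$ is an $\F_q$-subspace. If $a,b\in S(U)$, then $(a+b)u=au+bu\in U$ and $(ab)u=a(bu)\in U$ for every $u\in U$, so $S(U)$ is closed under addition and multiplication. It is therefore a subring of the field $\fqn$. A finite subring of a field containing $1$ is an integral domain, hence a field. (Equivalently, for $a\neq0$ the map $u\mapsto au$ is injective on the finite set $U$, hence bijective, so $a^{-1}U=U$.) The subfields of $\fqn$ containing $\F_q$ are exactly the fields $\F_{q^j}$ with $j\mid n$, so $S(U)=\F_{q^j}$ for such a $j$.

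By definition of $S(U)$, the $\F_q$-space $U$ is closed under multiplication by $\F_{q^j}$. It is therefore an $\F_{q^j}$-vector space, and $\dim_{\F_q}U=j\cdot\dim_{\F_{q^j}}U$. This gives $j\mid\dim_{\F_q}U$. No finiteness issue arises, since $U\neq0$ and $U$ is finite.

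Now let $W$ be any $\fqn$-linear subspace. Then $W$ is closed under all of $\fqn$, in particular under $\F_{q^j}$, and so is $U$. Hence $U\cap W$ is closed under addition and under scalars from $\F_{q^j}$, so it is an $\F_{q^j}$-subspace. Its $\F_q$-dimension is therefore a multiple of $j$; this also holds when the intersection is $0$. Taking $W=\angles{v}_{\fqn}$ for $P=\angles v_{\fqn}$ gives $j\mid w_U(P)$, which is \eqref{eq:weight-div}.

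No step is a real obstacle. The only point needing care is that $S(U)$ is closed under inverses, which follows from finiteness as explained above.
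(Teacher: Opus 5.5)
Your proposal is correct and follows essentially the same route as the paper: $S(U)$ is closed under the ring operations (additive inverses come for free, since $-1\in\F_q\subseteq S(U)$), nonzero elements are invertible because multiplication is injective on the finite space $U$, and the subfield theorem gives $S(U)=\F_{q^j}$ with $j\mid n$. Divisibility of $\dim_{\F_q}U$ and of $\dim_{\F_q}(U\cap W)$ then follows from the resulting $\F_{q^j}$-vector space structures, exactly as in the paper.
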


\begin{proof}
Since $U$ is an $\F_q$-space, $\F_q\subseteq S(U)$.  The set $S(U)$ is closed under addition, additive inverses, and multiplication.  If $0\neq a\in S(U)$, then multiplication by $a$ is an injective $\F_q$-linear endomorphism of the finite-dimensional space $U$.  It is therefore surjective, so $aU=U$ and $a^{-1}U=U$. 
Thus, $S(U)$ is a finite subfield of $\fqn$ containing $\F_q$.  By the subfield theorem for finite fields, it is $\F_{q^j}$ for a unique $j\mid n$; see \cite{LN97}.

The scalar action makes $U$ an $\F_{q^j}$-space, and hence
\[
 \dim_{\F_q}U=j\dim_{\F_{q^j}}U.
\]
 Thus, $U\cap W$ is an $\F_{q^j}$-space, and its dimension over $\fq$ is a multiple of $j$.
 Taking $W=P=\angles a_{\fqn}$ for $a\in\fqn^*$ proves \eqref{eq:weight-div}.
\end{proof}

\begin{corollary}\label{cor:weight-one-destroys}
If $S(U)=\F_{q^j}$ and $L_U$ contains a point of weight $i$, then $j\mid\gcd(i,n)$.
In particular, if $L_U$ contains a point of weight one, then
$
 S(U)=\F_q$.
\end{corollary}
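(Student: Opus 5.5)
This follows directly from \cref{l:scalar-divisibility}. Let $S(U)=\F_{q^j}$. By that lemma, $j$ is a divisor of $n$, and $j$ divides $w_U(P)$ for every point $P\in\PG(k-1,q^n)$; this is \eqref{eq:weight-div}.

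Suppose $L_U$ contains a point $P$ with $w_U(P)=i$. Applying \eqref{eq:weight-div} to $P$ gives $j\mid i$. Since we already have $j\mid n$, it follows that $j\mid\gcd(i,n)$.

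For the second assertion, take a point of weight one, that is, apply the first part with $i=1$. Then $j\mid\gcd(1,n)=1$, so $j=1$ and $S(U)=\F_q$.

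The only point to check is that the weight is defined by intersecting $U$ with the $\fqn$-span of a vector. This span is an $\fqn$-linear subspace, so \cref{l:scalar-divisibility} applies to it.
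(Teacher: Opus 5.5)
Your proof is correct and is exactly the argument the paper intends: the paper gives no separate proof and treats the corollary as an immediate consequence of \cref{l:scalar-divisibility} ($j\mid n$ together with \eqref{eq:weight-div}), specialized to weight one for the second assertion. One small point: the lemma assumes $U\neq 0$, and this holds automatically here because $L_U$ contains a point.
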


Note that the field $S(U)$ depends on the vector subspace $U$, rather than merely the related linear set.
As discussed in \cite{CMP24}, a linear set can arise from inequivalent defining subspaces. The assertions in the next theorem can be deduced from \cite[Proposition 6.1]{SZZ26}, and its proof.

\begin{theorem}\cite{SZZ26}\label{thm:three-heavy}
Let $i\geq 1$ and let $U\leq_{\F_q}\F_{q^n}^2$ have dimension $2i$.
Suppose that $L_U$ contains three distinct points of weight $i$.
Then $U$ is $\GL(2,q^n)$-equivalent to $T\times T$
for an $i$-dimensional $\F_q$-subspace $T\leq_{\F_q}\F_{q^n}$. If $S(T)=\mathbb F_{q^j}$, then the points of weight $i$ form a projective \(\mathbb F_{q^j}\)-subline of \(\mathrm{PG}(1,q^n)\), and hence there are exactly \(q^j+1\) such points.
\end{theorem}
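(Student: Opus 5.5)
Since $\dim U=2i$ and three points have weight $i$, I would first move three such points to $P_\infty$, $P_0$, $P_1$ using $\GL(2,q^n)$, which is $3$-transitive on $\PG(1,q^n)$; weights are preserved by \cref{lem:projective-invariance}. The intersections are $U\cap\angles{(1,0)}=A\times\set0$ and $U\cap\angles{(0,1)}=\set0\times B$ with $\dim A=\dim B=i$. They meet trivially and have total dimension $2i$, so $U=A\times B$.

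Next I use the third fat point $P_1$. Its intersection is $U\cap\angles{(1,1)}=\set{(x,x):x\in A\cap B}$, and this has dimension $i$, so $A\cap B$ has dimension $i$. Hence $A=B=:T$ and $U=T\times T$.

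Now I compute the weights. For $\alpha\in\fqn$ we have $U\cap\angles{(1,\alpha)}_{\fqn}=\set{(x,\alpha x):x\in T,\ \alpha x\in T}$, so
\[
w_U(P_\alpha)=\dim_{\F_q}(T\cap\alpha^{-1}T)
\]
for $\alpha\neq0$. This equals $i$ if and only if $\alpha T=T$, that is, $\alpha\in S(T)=\F_{q^j}$. Together with $P_0$ and $P_\infty$, which have weight $i$ by construction, the points of weight $i$ are exactly $\set{P_\lambda:\lambda\in\F_{q^j}}\cup\set{P_\infty}$. This is the canonical $\F_{q^j}$-subline and has $q^j+1$ points. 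It is a subline of $\PG(1,q^n)$ because $j\mid n$ by \cref{l:scalar-divisibility}. Applying the inverse of the normalizing map, the original fat points form a projectively equivalent image, hence an $\F_{q^j}$-subline.

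There is no real obstacle. The one point needing care is the dimension count in the first step: the two intersections lie in distinct $\fqn$-lines, so they meet trivially, and their sum lies in $U$ with full dimension $2i$, which gives the product decomposition. Also note that $\dim(T\cap\alpha^{-1}T)=i$ forces $\alpha^{-1}T=T$, so the weight computation is an exact characterization.
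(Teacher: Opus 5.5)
Your proof is correct and follows essentially the argument the paper relies on. The paper defers to \cite{SZZ26}, Proposition 6.1, and uses the same steps again in the proofs of \cref{prop:qplusone} and \cref{thm:partial-spread-rigidity}: send three fat points to $P_\infty,P_0,P_1$, obtain $U=T\times T$ by the dimension count, and read off the fat points as the canonical $\F_{q^j}$-subline from $w_U(P_\alpha)=\dim(T\cap\alpha T)$, which agrees with your $\dim(T\cap\alpha^{-1}T)$ since $T\cap\alpha^{-1}T=\alpha^{-1}(\alpha T\cap T)$.
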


Given that weight-one points exist by definition, \cref{cor:weight-one-destroys} and \cref{thm:three-heavy} imply the following:

\begin{theorem}\label{thm:line-rigidity}
Let $i\geq 2$, let $U\leq_{\F_q}\fqn^2$ have dimension $2i$, and suppose that $L_U$ is $(r,i)$-regular fat.  Then
$
 r\leq 2$ or $r=q+1$.
If $r>2$, the fat points form a projective $\F_q$-subline.
\end{theorem}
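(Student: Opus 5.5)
The plan is to reduce everything to \cref{thm:three-heavy} and then use \cref{cor:weight-one-destroys} to pin down the subfield. If $r\le 2$ there is nothing to prove, so assume $r\ge 3$. Then $L_U$ has at least three distinct points of weight $i$, and $\dim U=2i$. \cref{thm:three-heavy} therefore applies. It gives $A\in\GL(2,q^n)$ with $A(U)=T\times T$ for some $i$-dimensional $T\le_{\F_q}\fqn$. Moreover, if $S(T)=\F_{q^j}$, then the points of weight $i$ of $L_{T\times T}$ form an $\F_{q^j}$-subline, so there are exactly $q^j+1$ of them. By \cref{lem:projective-invariance}, weights are preserved under $A$. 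Hence $L_{T\times T}$ is again $(r,i)$-regular fat, and its fat points are the images of those of $L_U$. We may therefore assume $U=T\times T$.

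The main step is to show $j=1$, and we get this by comparing $S(T)$ with $S(U)$. For $a\in\fqn$ we have $a(T\times T)=aT\times aT$. This lies in $T\times T$ if and only if $aT\subseteq T$. Therefore $S(T\times T)=S(T)=\F_{q^j}$.

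Since $L_U$ is regular fat, it contains at least one point of weight one. \cref{cor:weight-one-destroys} then forces $S(U)=\F_q$, so $j=1$. (Equivalently, \cref{l:scalar-divisibility} gives $j\mid 1$.)

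Consequently the fat points of $L_{T\times T}$ form an $\F_q$-subline, and $r=q+1$. Applying $A^{-1}$, which is a collineation and maps $\F_q$-sublines to $\F_q$-sublines, shows that the fat points of the original $L_U$ also form a projective $\F_q$-subline. The only point needing care is that the number $r$ counted in \cref{thm:three-heavy} is the number of points of weight exactly $i$, which is precisely the number of fat points. So $r=q+1$ whenever $r>2$, completing the dichotomy $r\le2$ or $r=q+1$. I expect no real obstacle beyond verifying $S(T\times T)=S(T)$, which is immediate.
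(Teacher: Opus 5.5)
Your proposal is correct and follows the paper's proof. Both pass to $T\times T$ via \cref{thm:three-heavy}, note that $S(T)=\F_{q^j}$ acts on $T\times T$ so that $j$ divides every weight, and use the guaranteed weight-one point to force $j=1$, giving $r=q+1$ and an $\F_q$-subline; your explicit checks that $S(T\times T)=S(T)$ and that $A$ transports weights and sublines just spell out steps the paper leaves implicit.
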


\begin{proof}
Suppose that $r>2$.  By \cref{thm:three-heavy}, up to an $\fqn$-linear isomorphism, one has $U=T\times T$ with $S(T)=\F_{q^j}$, and the fat point set has $q^j+1$ points.  Every point weight is divisible by $j$.  Regular fatness provides a point of weight one, so $j=1$. Therefore, $r=q+1$, and the fat point set is projectively equivalent to $\PG(1,q)$.  \end{proof}

This gives a negative answer to the first open problem posed in \cite{SZZ26} (Problem~\ref{prob:szz1}).

Let us describe the normal forms for the rank-$(2i)$ $(r,i)$-regular fat linear sets in $\PG(1,q^n)$ for $r=2$ or $r=q+1$.

\begin{proposition}\label{prop:two-heavy}\cite[Proposition 3.2 and Theorem 4.1]{NPSZ22}
Let $U\leq_{\F_q}\fqn^2$ have dimension $2i$ and suppose that exactly two points have weight $i$.  
Up to an element of $\GL(2,q^n)$,
$
 U=T_1\times T_2$
for $i$-dimensional $\F_q$-subspaces $T_1,T_2\leq \fqn$.  The resulting linear set is $(2,i)$-regular fat if and only if
\begin{equation}\label{eq:two-heavy-condition}
\dim(T_1\cap\alpha T_2)\leq 1
 \qquad
 (\alpha\in \fqn).
\end{equation}
\end{proposition}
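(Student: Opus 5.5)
Normalize first, then reduce the weight condition on the two-fat-point line to a dimension condition on $T_1$ and $\alpha T_2$. By hypothesis there are exactly two points $P,Q$ of weight $i$. Choose an element of $\GL(2,q^n)$ sending $P$ to $P_\infty=\langle(0,1)\rangle$ and $Q$ to $P_0=\langle(1,0)\rangle$; by \cref{lem:projective-invariance} the weights are preserved. Then $U\cap\langle(1,0)\rangle=T_1\times\{0\}$ and $U\cap\langle(0,1)\rangle=\{0\}\times T_2$, each of dimension $i$. These two subspaces meet trivially, and their sum has dimension $2i=\dim U$. Hence $U=T_1\times T_2$ with $\dim T_1=\dim T_2=i$.

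Next compute the remaining weights. For $\alpha\in\fqn^*$, consider the point $P_\alpha=\langle(1,\alpha)\rangle$. A vector $(t_1,t_2)\in T_1\times T_2$ lies on $P_\alpha$ iff $t_2=\alpha t_1$. Thus
\[
w_U(P_\alpha)=\dim_{\F_q}\{t\in T_1:\alpha t\in T_2\}=\dim(T_1\cap\alpha^{-1}T_2).
\]
As $\alpha$ runs over $\fqn^*$, so does $\alpha^{-1}$. For $\alpha=0$, the intersection $T_1\cap 0\cdot T_2=\{0\}$, which is harmless. So the weights of all points other than $P_0,P_\infty$ are exactly the numbers $\dim(T_1\cap\beta T_2)$ for $\beta\in\fqn^*$.

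Finally, check the biconditional.
\begin{itemize}
\item[(i)] Suppose \eqref{eq:two-heavy-condition} holds. Then every point other than $P_0,P_\infty$ has weight at most $1$. Since $i\ge2$, exactly two points have weight $i$.
\item[(ii)] We still need a point of weight one to exist. By \cref{prop:size}-type counting, $|L_U|$ is determined by the weights. Since $\dim U=2i>i$, some vector of $U$ lies outside $T_1\times\{0\}$ and $\{0\}\times T_2$. For instance, $(t_1,t_2)$ with both coordinates nonzero spans a point different from $P_0,P_\infty$. That point has positive weight, hence weight $1$.
\item[(iii)] Conversely, suppose $L_U$ is $(2,i)$-regular fat. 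Then every $\beta\ne0$ gives a point $P_{\beta^{-1}}\notin\{P_0,P_\infty\}$, which is not fat. So its weight is $0$ or $1$, which is precisely \eqref{eq:two-heavy-condition}.
\end{itemize}

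The only subtle point is step (iii). There we need that exactly two points of weight $i$ exist, so that no other point can have weight $i$. This is by hypothesis, and it is consistent because an intersection $T_1\cap\beta T_2$ of dimension $i$ would force $T_1=\beta T_2$. That in turn would give three fat points, as in \cref{thm:three-heavy}.
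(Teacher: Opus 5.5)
Your proof is correct. The paper gives no argument of its own for this statement; it only cites \cite[Proposition 3.2 and Theorem 4.1]{NPSZ22}, which places the result in the broader setting of linear sets with two points of complementary weights.

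Your argument is instead self-contained, and it is exactly the $k=2$ instance of what the paper does later. You send the two fat points to the coordinate points and split $U$ as a direct sum, as in \cref{thm:saturated-converse}. You then read off the weights from the fiber formula $w_U(P)=\dim\Lambda_P$ of \cref{thm:block-construction} (compare \eqref{eq:line-weight1}). So it can stand in place of the citation.

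Two steps are redundant but harmless. The appeal to \cref{prop:size} in (ii) plays no role. The closing paragraph is also unnecessary: in (iii), $(2,i)$-regular fatness alone already makes $P_0,P_\infty$ the only fat points, so every other point of $L_U$ has weight one.
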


The last type of regular fat linear set is associated with the usual Sidon space intersection condition.

\begin{proposition}\label{prop:qplusone}
Let $U\leq_{\F_q}\fqn^2$ have dimension $2i$. Then $L_U$ is
$(q+1,i)$-regular fat if and only if, up to an element of
$\GL(2,q^n)$,
$U=T\times T$
for an $i$-dimensional $\F_q$-subspace $T\leq\fqn$ satisfying
\begin{equation}\label{eq:sidon-condition}
S(T)=\F_q,
\qquad
\dim(T\cap\alpha T)\leq1
\quad(\alpha\notin\F_q).
\end{equation}
In this case, the fat points form an $\F_q$-subline.
\end{proposition}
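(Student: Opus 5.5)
The plan is to prove the two directions separately, computing weights directly for $U=T\times T$.

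For the forward direction, suppose $L_U$ is $(q+1,i)$-regular fat. Since $q+1\geq 3$, \cref{thm:three-heavy} applies: up to $\GL(2,q^n)$ we have $U=T\times T$ with $\dim T=i$, and if $S(T)=\F_{q^j}$ the fat points form an $\F_{q^j}$-subline. The linear set contains a point of weight one, so \cref{cor:weight-one-destroys} gives $S(U)=\F_q$. We also have $S(T\times T)=S(T)$, because $a(T\times T)\subseteq T\times T$ holds exactly when $aT\subseteq T$. Hence $S(T)=\F_q$, and the fat points form an $\F_q$-subline.

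Next we compute weights. The intersection $U\cap P_\alpha$ consists of the vectors $(t,\alpha t)$ with $t\in T$ and $\alpha t\in T$, so
\[
w_U(P_\alpha)=\dim(T\cap\alpha^{-1}T)=\dim(T\cap\alpha T)\quad(\alpha\neq0),
\]
while $w_U(P_0)=w_U(P_\infty)=i$. For $\alpha\in\F_q^*$ the weight is $i$, which accounts for the $q+1$ subline points $P_\alpha$, $\alpha\in\F_q\cup\{\infty\}$. Every other point must have weight at most one, so $\dim(T\cap\alpha T)\leq1$ for $\alpha\notin\F_q$. This gives \eqref{eq:sidon-condition}.

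For the converse, take $U=T\times T$ with \eqref{eq:sidon-condition}, using \cref{lem:projective-invariance} to pass to this normal form. By the same computation, exactly the $q+1$ points of $\PG(1,q)$ have weight $i\geq2$, and all other points have weight at most one. It remains to show that some point has weight exactly one, i.e.\ that $\dim(T\cap\alpha T)=1$ for some $\alpha\notin\F_q$. Choose $t_1,t_2\in T$ that are $\F_q$-independent; this is possible since $i\geq2$. Put $\alpha=t_2/t_1$. Then $\alpha\notin\F_q$, and $t_2=\alpha t_1\in T\cap\alpha T$, so that intersection is nonzero. The condition then forces its dimension to be exactly one. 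Therefore $L_U$ is $(q+1,i)$-regular fat, and its fat points form the canonical subline.

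The step needing the most care is the forward direction's use of \cref{thm:three-heavy} together with $S(T)=S(U)$. One must check that the $\GL(2,q^n)$-normalization keeps the scalar stabilizer unchanged. It does, since $S(AU)=S(U)$ for $\fqn$-linear $A$ by commutativity of scalar multiplication with $A$. One must also check that the fat-point count then pins down $j=1$. Everything else is the direct weight computation above.
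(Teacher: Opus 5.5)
Your proof is correct and follows the paper's argument: the normal form $T\times T$ comes from \cref{thm:three-heavy}, $S(T)=\F_q$ comes from \cref{cor:weight-one-destroys} (with invariance of the scalar stabilizer under $\GL(2,q^n)$), and everything else is the weight formula $w_{T\times T}(P_\alpha)=\dim(T\cap\alpha T)$. There are two small deviations. In the forward direction you get the bound $\dim(T\cap\alpha T)\le 1$ from the exact fat-point count, since the $\F_q$-subline already uses all $q+1$ fat points, whereas the paper gets it because weight $i$ would give $T=\alpha T$, contradicting $S(T)=\F_q$. In the converse you explicitly exhibit a weight-one point, which the paper leaves implicit.
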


\begin{proof}
For $\alpha\in\fqn^*$  and any $\fq$-subspace $T$ of $\fqn$, we have
\begin{equation}\label{eq:line-weight1}
w_{T\times T}\bigl(\angles{(1,\alpha)}_{\fqn}\bigr)
=
\dim\set{(x,\alpha x):x\in T\cap\alpha^{-1}T}=
\dim(T\cap\alpha T),
\end{equation}
while the points of slopes $0$ and $\infty$ have weight $\dim T$.

Suppose first that $L_U$ is $(q+1,i)$-regular fat. By
\cref{thm:three-heavy}, up to $\GL(2,q^n)$-equivalence we may assume that
$U=T\times T$ for some $i$-dimensional $\F_q$-subspace
$T\leq\fqn$. 
Since the scalar stabilizer is preserved by $\GL(2,q^n)$, $S(T\times T)=S(T)$, and $L_U$ contains at least one point of weight one, \cref{cor:weight-one-destroys} yields $S(T)=\F_q$.
Now, if $\alpha\notin\F_q$ and
$\dim(T\cap\alpha T)>1$, then
\eqref{eq:line-weight1} and regular fatness force
$\dim(T\cap\alpha T)=i$. Thus
$T=\alpha T$, contradicting $\alpha\notin S(T)=\F_q$.

Conversely, let $U=T\times T$ with $T$ satisfying
\eqref{eq:sidon-condition}. The standard $\F_q$-subline consists
entirely of points of weight $i$, whereas \eqref{eq:line-weight1}
shows that every point outside this subline has weight at most one, and $L_U$ is
$(q+1,i)$-regular fat.
\end{proof}

Condition \eqref{eq:sidon-condition} is a standard intersection form of the Sidon-space property; see \cite{BSZ17,RRT18,Zul23}.  
Thus the rank-$2i$ $(q+1,i)$-regular fat linear sets are related to the existence and equivalence theory of Sidon spaces, while the  rank-$2i$ $(2,i)$-regular fat linear sets are controlled by a two-space multi-Sidon condition.

\section{Admissible block products}\label{sec:ABP}
 
Let $T=(T_1,\ldots,T_c)$, where
$
 T_1,\ldots,T_c
$
are  $i$-dimensional $\fq$-subspaces of $\fqn$, and write
$ S(T_\nu)=\F_{q^{j_\nu}}$.
Also, let $m=(m_1,\ldots,m_c)$, where  $m_1,\ldots,m_c$ are positive integers such that $
 k=m_1+\cdots+m_c\geq 2$.
Partition the coordinate set of $\PG(k-1,q^n)$ into blocks $I_1,\ldots,I_c$ with $\card{I_\nu}=m_\nu$. The \emph{block product} related to $(T,m)$ is 
\begin{equation}\label{eq:block-product}
 U(T,m)=T_1^{m_1}\times\cdots\times T_c^{m_c}\leq \fqn^k.
\end{equation}

The following conditions are exactly what is needed to prevent intermediate point weights.

\begin{definition}\label{def:admissible}
The block product $U(T,m)$ in \eqref{eq:block-product} is \emph{admissible} if:
\begin{enumerate}[label=\textup{(A\arabic*)}]
\item for every $\nu$ with $m_\nu\geq 2$ and every $\alpha\in\fqn\setminus S(T_\nu)$,
\[
 \dim_{\F_q}(T_\nu\cap\alpha T_\nu)\leq 1;
\]
\item for $\mu\neq\nu$ and every $\alpha\in \fqn$,
\[
 \dim_{\F_q}(T_\mu\cap\alpha T_\nu)\leq 1;
\]
\item if $c=1$, then $S(T_1)=\F_q$.
\end{enumerate}
\end{definition}

Each scalar-equivalence class produces one finite subgeometry in its coordinate span.

\begin{theorem}\label{thm:block-construction}
Let $U(T,m)$ be an admissible block product.  
The linear set associated with $U=U(T,m)$ in \eqref{eq:block-product} is $(r,i)$-regular fat, where
\begin{equation}\label{eq:block-r}
 r=\sum_{\nu=1}^c\qnum{m_\nu}{q^{j_\nu}}.
\end{equation}
Its fat point set is a disjoint union
\begin{equation}\label{eq:block-heavy}
 \cF(U)=\bigsqcup_{\nu=1}^c F_\nu,
\end{equation}
where each $F_\nu\cong\PG(m_\nu-1,q^{j_\nu})$ is a subgeometry of $\PG(k-1,q^n)$ lying in the $(m_\nu-1)$-dimensional coordinate subspace   supported on $I_\nu$.
\end{theorem}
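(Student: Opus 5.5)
The plan is to compute the weight of an arbitrary point $P=\angles{v}_{\fqn}$, $v=(v_1,\ldots,v_k)\neq 0$, directly from the product structure. An element of $U\cap\angles v_{\fqn}$ has the form $\lambda v$ with $\lambda\in\fqn$ and $\lambda v_s\in T_{\nu(s)}$ for every coordinate $s$, where $\nu(s)$ is the block containing $s$. So $w_U(P)=\dim_{\F_q}\Lambda(v)$, where
\[
\Lambda(v)=\bigcap_{s:\,v_s\neq0} v_s^{-1}T_{\nu(s)}\leq\fqn ,
\]
an intersection of scalar translates of the $T_\nu$; the map $\lambda\mapsto\lambda v$ is an $\F_q$-isomorphism onto $U\cap\angles v_{\fqn}$. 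Let $J(v)=\{\nu:v_s\ne0\text{ for some }s\in I_\nu\}$ be the set of blocks met by the support of $v$. The whole proof is a case analysis on $J(v)$ and on the entries of $v$.

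\textbf{Case $\card{J(v)}\geq2$.} Pick $s\in I_\mu$ and $t\in I_\nu$ with $\mu\ne\nu$ and $v_s,v_t\neq0$. Then $\Lambda(v)\subseteq v_s^{-1}T_\mu\cap v_t^{-1}T_\nu=v_s^{-1}(T_\mu\cap v_sv_t^{-1}T_\nu)$, which has dimension at most $1$ by (A2). So such points have weight $\le1$.

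\textbf{Case $J(v)=\{\nu\}$.} Scale $v$ so that some nonzero entry equals $1$. If every nonzero entry lies in $S(T_\nu)=\F_{q^{j_\nu}}$, then each $v_s^{-1}T_\nu=T_\nu$, so $\Lambda(v)=T_\nu$ and the weight is $i$. The points of this kind are exactly the points of the subgeometry $F_\nu=\{\angles v_{\fqn}:v\in\F_{q^{j_\nu}}^{I_\nu}\setminus\{0\}\}$ inside the coordinate subspace on $I_\nu$, which has $\qnum{m_\nu}{q^{j_\nu}}$ points. Otherwise some entry $v_t\notin S(T_\nu)$; this forces $m_\nu\ge2$, since the normalized entry is $1$. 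Then $\Lambda(v)\subseteq T_\nu\cap v_t^{-1}T_\nu$, which has dimension at most $1$ by (A1), because $v_t^{-1}\notin S(T_\nu)$.

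\textbf{Conclusion.} Every point outside $\bigsqcup_\nu F_\nu$ has weight at most one, and every point of $F_\nu$ has weight exactly $i$. The $F_\nu$ are pairwise disjoint because their supports lie in disjoint coordinate blocks. This gives \eqref{eq:block-r} and \eqref{eq:block-heavy}. The step needing most care is the normalization in the one-block case: $S(T_\nu)$-rescaling is legitimate, and the set of points of weight $i$ is precisely the subgeometry $F_\nu$, with no point counted twice.

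It remains to verify condition (iii), that some point of weight one exists.

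If $c\ge2$, take $v$ with one nonzero entry $1$ in $I_1$, one nonzero entry $1$ in $I_2$, and all other entries $0$. Then $\Lambda(v)=T_1\cap T_2$, so the weight is at most $1$. This may be $0$, so instead use a counting argument. By the size formula in \cref{prop:size}, applied to the number of nonzero vectors, the points of weight $i$ account for at most $r(q^i-1)$ nonzero vectors of $U$. We have $r(q^i-1)<q^{ki}-1=\card{U\setminus\{0\}}$, since $r\le\sum_\nu q^{(m_\nu-1)i}\cdot 2$, which is far below $q^{(k-1)i}$ when $k\ge2$ (as $j_\nu\le i$). Hence some nonzero vector lies on a point of weight neither $i$ nor $0$, that is, of weight $1$.

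If $c=1$, (A3) gives $j_1=1$. Then $r=\qnum{k}{q}$, and the same vector count applies, because
\[
\qnum{k}{q}(q^i-1)<q^{ki}-1 \quad\text{for } i\ge2,\ k\ge2 .
\]
This inequality holds since $\qnum kq\le 2q^{k-1}$ and $2q^{k-1+i}\le q^{ki}$ when $(k-1)(i-1)\ge1$, with the edge case $q=2$, $k=i=2$ checked directly: $3\cdot3=9<15$.

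This counting step, which handles the case where naive test points may have weight $0$, is the main obstacle I anticipate; the weight computations themselves are routine.
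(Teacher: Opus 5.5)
Your proof is correct, and the weight analysis is exactly the paper's: the formula $w_U(P)=\dim\Lambda(v)$, condition (A2) when the support meets two blocks, and the normalization plus (A1) when the support lies in one block. The only divergence is in showing that a point of weight one exists.

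The paper takes the test vector inside $U$. For $c\ge2$ it uses $xe_r+ye_s$ with $0\ne x\in T_\mu$ and $0\ne y\in T_\nu$ in distinct blocks. For $c=1$ it places $\F_q$-independent $x,y\in T_1$ in two coordinates; this point is nonfat because $y/x\notin\F_q=S(T_1)$. Since such a vector lies in $U$, its point has weight at least one, and being nonfat it has weight exactly one. Your worry that the test point might have weight $0$ arises only from choosing entries equal to $1$, which need not lie in $T_1$ or $T_2$.

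Your counting replacement is nevertheless valid. The fat points carry exactly $r(q^i-1)$ nonzero vectors of $U$, and you need $r(q^i-1)<q^{ki}-1$. This is an elementary count; citing \cref{prop:size} for it would be circular, since that proposition presupposes regular fatness. The phrase ``far below'' deserves one line of justification, for instance:
\[
\sum_\nu q^{(m_\nu-1)i}\le c\,q^{(k-c)i}
\quad\text{and}\quad
2c\le q^{(c-1)i}\ \text{(because $q^i\ge4$)},
\]
whence $r\le q^{(k-1)i}$ and $r(q^i-1)\le q^{ki}-q^{(k-1)i}<q^{ki}-1$.

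The counting route costs these estimates but yields slightly more: it gives the exact number of weight-one points, so it recovers \cref{prop:size} for these products. The paper's explicit vector is shorter and needs no inequalities.
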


\begin{proof}
Let $P=\angles{a}_{\fqn}$, $a=(a_1,\ldots,a_k)$, be a point in $\PG(k-1,q^n)$.  Put
\begin{equation}\label{eq:support}
 J=\supp(a)=\set{h:h\in\{1,\ldots,k\},\,a_h\neq 0},
\end{equation}
and let $\beta(h)$ be the block containing coordinate $h$.  
Define the following subset of $\fqn$:
\begin{equation}\label{eq:block-fiber}
 \Lambda_P=\bigcap_{h\in J}a_h^{-1}T_{\beta(h)}.
\end{equation}
The map $\lambda\mapsto\lambda a$ is an $\F_q$-linear isomorphism from $\Lambda_P$ onto $U\cap \angles{a}_{\fqn}$, so
\begin{equation}\label{eq:block-weight}
 w_U(P)=\dim\Lambda_P.
\end{equation}
Every space in the intersection \eqref{eq:block-fiber} has dimension $i$.  Therefore $w_U(P)=i$ if and only if all participating subspaces are equal.

If the support of $P$ meets two distinct blocks, equality of two participating spaces would make the corresponding $T_\mu,T_\nu$ scalar-equivalent, contrary to the choice of representatives.  Hence a fat point is supported in a single block $I_\nu$.  Within that block, fix $h_0\in J$.  The spaces $a_h^{-1}T_\nu$ are all equal if and only if
\[
 \frac{a_h}{a_{h_0}}T_\nu=T_\nu
 \qquad
 (h\in J),
\]
which is equivalent to all nonzero coordinate ratios lying in $S(T_\nu)^*=\F_{q^{j_\nu}}^*$.  Thus the fat points in that block form the embedded subgeometry $\PG(m_\nu-1,q^{j_\nu})$.  This proves \eqref{eq:block-heavy} and \eqref{eq:block-r}.

Now let $P$ be nonfat.  If its support meets distinct blocks, then $\Lambda_P$ is contained in an intersection covered by condition \textup{(A2)}, so its dimension is at most one.  If its support lies in one block, then at least two participating scalar copies differ, and the corresponding scalar ratio lies outside $S(T_\nu)$.  Condition \textup{(A1)} again gives $\dim\Lambda_P\leq 1$.  Hence every nonfat point of $L_U$ has weight one.

It remains to exhibit such a point.  If $c\geq 2$, choose nonzero $x\in T_\mu$ and $y\in T_\nu$ in distinct blocks, and form a vector supported in one coordinate of each block.  Its point belongs to $L_U$ and is nonfat, hence has weight one.  If $c=1$, choose $\F_q$-linearly independent $x,y\in T_1$ and place them in two coordinates.  Since $S(T_1)=\F_q$, the resulting point is nonfat and therefore has weight one.
\end{proof}

\begin{remark}
Suppose that $T_1,\ldots,T_c$ satisfy the admissibility conditions and $S(T_\nu)=\F_q$ for every $\nu$.  
By Theorem~\ref{thm:block-construction}, in the extremal cases $c=k$, and $c=1$,
the fat point set is a projective  basis, and an $\fq$-subgeometry of $\PG(k-1,q^n)$, isomorphic to $\PG(k-1,q)$, respectively.
\end{remark}

\section{Structural constraints on admissible block products}\label{sec:struc}

A closer inspection of the proof of \cref{thm:block-construction} reveals that the admissible block products introduced in \cref{def:admissible} are subject to additional structural restrictions. In this section, we provide a detailed description of these conditions. 
We derive two properties from the proof of \cref{thm:block-construction} that will be useful in this section.
\begin{proposition}\label{p:derived}
Let $U = U(T, m)$ be an admissible block product, with $L_U$ the associated linear set in $\PG(k-1, q^n)$. 
Furthermore, let $P =\angles a_{\fqn}$, $a=(a_1,\ldots,a_k)$, be a point in $L_U$ and let $J$ be its support as defined in \eqref{eq:support}. The following holds:
\begin{enumerate}[label={$(\roman*)$}]
\item    If $J$ intersects two distinct blocks, then $P$ has weight one.
\item     If $J$ is contained in a single block $I_\nu$, then $P$ has weight $i$ if and only if the quotients of all non-zero coordinates are in the scalar stabilizer $\F_{q^{j_\nu}}$.
 \item   In any case, the following formula holds for the weight of a point $P$: 
 \begin{equation}\label{eq:derived}
 w_U(P) = \dim \bigcap_{h\in J}a_h^{-1}T_{\beta(h)},
 \end{equation}
where $\beta(h)$ is the block containing coordinate $h$.
 \end{enumerate}
 \end{proposition}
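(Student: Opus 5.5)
The plan is to extract all three items directly from the weight computation in the proof of \cref{thm:block-construction}. I would prove them in the order (iii), (ii), (i), since the first two parts rest on the fiber formula.

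For (iii), write $P=\angles a_{\fqn}$. A vector of $U\cap\angles a_{\fqn}$ has the form $\lambda a$ with $\lambda\in\fqn$. By the definition of the block product \eqref{eq:block-product}, $\lambda a\in U$ exactly when $\lambda a_h\in T_{\beta(h)}$ for every coordinate $h$. For $h\notin J$ this holds automatically, since $a_h=0$. For $h\in J$ it is equivalent to $\lambda\in a_h^{-1}T_{\beta(h)}$. Hence $\lambda\mapsto\lambda a$ is an $\F_q$-linear bijection from $\Lambda_P=\bigcap_{h\in J}a_h^{-1}T_{\beta(h)}$ onto $U\cap\angles a_{\fqn}$. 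This bijection is injective because $a\neq0$, and it gives \eqref{eq:derived}.

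For (ii), suppose $J\subseteq I_\nu$. Every space $a_h^{-1}T_\nu$ in the intersection has dimension $i$, so the intersection has dimension $i$ if and only if all these spaces coincide. Fix $h_0\in J$. Then $a_h^{-1}T_\nu=a_{h_0}^{-1}T_\nu$ holds if and only if $(a_h/a_{h_0})T_\nu=T_\nu$. Since multiplication by a nonzero scalar is injective and $T_\nu$ is finite, this is equivalent to $a_h/a_{h_0}\in S(T_\nu)=\F_{q^{j_\nu}}$. All pairwise quotients then lie in $\F_{q^{j_\nu}}$ as well, because it is a field.

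For (i), suppose $J$ contains $h\in I_\mu$ and $h'\in I_\nu$ with $\mu\neq\nu$. Then
\[
\Lambda_P\subseteq a_h^{-1}T_\mu\cap a_{h'}^{-1}T_\nu=a_h^{-1}\bigl(T_\mu\cap (a_h/a_{h'})T_\nu\bigr).
\]
By (A2) the right-hand side has dimension at most $1$. Since $P\in L_U$, its weight is positive, so it equals one.

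I expect no genuine obstacle here. The only point needing care is the strictness in (i): (A2) alone bounds the weight by one, and the hypothesis $P\in L_U$ is what excludes weight zero.
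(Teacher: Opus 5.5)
Your proposal is correct and takes essentially the same route as the paper, which reads this proposition off the proof of \cref{thm:block-construction}: the fiber isomorphism $\lambda\mapsto\lambda a$ from $\Lambda_P$ onto $U\cap\angles{a}_{\fqn}$ gives (iii), the fact that an intersection of $i$-dimensional copies $a_h^{-1}T_\nu$ has dimension $i$ only when they all coincide gives (ii), and (A2) gives (i). The only small difference is that in (i) you apply (A2) directly, whereas the paper first rules out fatness via scalar-inequivalence of the representatives; your shortcut is harmless and slightly cleaner, and you correctly note that $P\in L_U$ is what makes the weight exactly one.
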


The dimensions of the scalar stabilizers of distinct components cannot share a nontrivial divisor.

\begin{proposition}\label{prop:pairwise-coprime}
In the setting of \cref{sec:ABP},
$ \gcd(j_\mu,j_\nu)=1$ ($\mu\neq\nu$).
\end{proposition}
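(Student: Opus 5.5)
We argue by contradiction using condition (A2). Suppose $\mu\neq\nu$ and $d=\gcd(j_\mu,j_\nu)>1$. Then $\F_{q^d}\subseteq \F_{q^{j_\mu}}\cap\F_{q^{j_\nu}}=S(T_\mu)\cap S(T_\nu)$, so both $T_\mu$ and $T_\nu$ are $\F_{q^d}$-vector spaces, and so is $\alpha T_\nu$ for every $\alpha\in\fqn^*$.

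The key step is to find $\alpha\in\fqn^*$ with $T_\mu\cap\alpha T_\nu\neq\{0\}$. Pick nonzero $x\in T_\mu$ and $y\in T_\nu$, which exist since both spaces have dimension $i\geq 2>0$, and set $\alpha=xy^{-1}$. Then $x\in T_\mu\cap\alpha T_\nu$. This intersection is an intersection of two $\F_{q^d}$-subspaces of $\fqn$, hence itself an $\F_{q^d}$-space. Being nonzero, it has $\F_q$-dimension at least $d\geq 2$, since it contains $\F_{q^d}x$. This contradicts (A2), which requires $\dim_{\F_q}(T_\mu\cap\alpha T_\nu)\leq 1$ for all $\alpha$. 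Hence $\gcd(j_\mu,j_\nu)=1$.

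An equivalent route goes through \cref{p:derived}. Consider the point with coordinate $x$ in one position of block $I_\mu$, coordinate $y$ in one position of block $I_\nu$, and zeros elsewhere. By \cref{p:derived}(i) it has weight one. On the other hand, by \eqref{eq:derived} its weight equals $\dim(x^{-1}T_\mu\cap y^{-1}T_\nu)$, which is a nonzero $\F_{q^d}$-space, and \cref{l:scalar-divisibility} style divisibility then forces $d\mid 1$.

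There is no real obstacle. The only point needing care is that the intersection inherits the $\F_{q^d}$-structure, and this holds because both scalar stabilizers contain $\F_{q^d}$: $\F_{q^j}\subseteq\F_{q^{j'}}$ whenever $j\mid j'$, and scalar multiplication by a fixed $\alpha$ commutes with multiplication by $\F_{q^d}$.
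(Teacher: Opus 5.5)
Your proof is correct and is essentially the paper's argument. The paper follows your second route: it takes the point $\langle xe_r+ye_s\rangle_{\fqn}$, uses regular fatness to get $\dim(x^{-1}T_\mu\cap y^{-1}T_\nu)=1$, and then uses the $\F_{q^g}$-structure of this intersection to force $g\mid 1$; your main route applies (A2) directly to $T_\mu\cap (x/y)T_\nu = x\bigl(x^{-1}T_\mu\cap y^{-1}T_\nu\bigr)$, which is the same computation without the detour through \cref{thm:block-construction} and point weights.
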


\begin{proof}
Fix distinct blocks $I_\mu$ and $I_\nu$.  Choose nonzero elements
$
 x\in T_\mu$,
$ y\in T_\nu$,
and coordinates $r\in I_\mu$, $s\in I_\nu$.  The vector
$
 v=xe_r+ye_s
$
belongs to $U=U(T,m)$, so the projective point $P=\angles v_{\fqn}$ lies in $L_U$.  Its support meets two different scalar-equivalence blocks; hence, $P$ is not fat.  Regular fatness therefore gives
$
 w_U(P)=1.
$
Therefore, by \cref{p:derived}, the dimension of
$
 \Lambda_P=x^{-1}T_\mu\cap y^{-1}T_\nu$
is one.
We have
\[
 S(T_\mu)\cap S(T_\nu)
 =\F_{q^{j_\mu}}\cap\F_{q^{j_\nu}}
 =\F_{q^g},
\]
where $g=\gcd(j_\mu,j_\nu)$.
Multiplying a subspace of $\fqn$ by a nonzero scalar does not change its scalar stabilizer, so both $x^{-1}T_\mu$ and $y^{-1}T_\nu$ are stable under $\F_{q^g}$.  Their intersection $\Lambda_P$ is therefore an $\F_{q^g}$-vector space.  Consequently
\[
 g\mid\dim\Lambda_P=w_U(P)=1.
\]
Thus $g=1$, as claimed.
\end{proof}

A nontrivial scalar stabilizer can occur in a repeated block only at the full fatness degree.

\begin{proposition}\label{prop:repeated-nontrivial}
In the setting of \cref{sec:ABP}, suppose that $m_\nu\geq 2$ and $j_\nu>1$. Then $j_\nu=i$,
and $T_\nu$ is one-dimensional over $\F_{q^i}$.  In particular, $T_\nu=\beta\F_{q^i}$ for some $\beta\in \fqn^*$.
\end{proposition}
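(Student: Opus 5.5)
Since $m_\nu\geq 2$, the block $I_\nu$ contains two distinct coordinates, say $r,s$. I will produce a nonfat point of $L_U$ supported on $\{r,s\}$ and apply \cref{p:derived}: its weight is $1$ by regular fatness, while \cref{l:scalar-divisibility} forces the weight to be a multiple of $j_\nu$. The conflict can only be escaped when $T_\nu$ has $\F_{q^{j_\nu}}$-dimension one.

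Write $j=j_\nu$. By \cref{l:scalar-divisibility}, $T_\nu$ is an $\F_{q^j}$-space and $j\mid i$, so $\dim_{\F_{q^j}}T_\nu=i/j$. Suppose, aiming for a contradiction, that $i/j\geq 2$. Then there exist $x,y\in T_\nu$ that are $\F_{q^j}$-linearly independent, i.e.\ $y/x\notin\F_{q^j}$. Put $v=xe_r+ye_s\in U$ and $P=\angles v_{\fqn}\in L_U$. Its support lies in the single block $I_\nu$, and the ratio of its nonzero coordinates is $y/x\notin\F_{q^{j}}=S(T_\nu)$. By \cref{p:derived}(ii), $P$ is not fat, so regular fatness gives $w_U(P)=1$.

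On the other hand, by \eqref{eq:derived},
\[
w_U(P)=\dim\bigl(x^{-1}T_\nu\cap y^{-1}T_\nu\bigr).
\]
Both $x^{-1}T_\nu$ and $y^{-1}T_\nu$ are $\F_{q^j}$-stable, so their intersection is an $\F_{q^j}$-space. Hence $j\mid w_U(P)=1$, which contradicts $j>1$. Therefore $i/j=1$, that is, $j_\nu=i$, and $T_\nu$ is one-dimensional over $\F_{q^i}$.

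Finally, choose any nonzero $\beta\in T_\nu$. Then $\F_{q^i}\beta\subseteq T_\nu$, and both spaces have $\F_q$-dimension $i$, so $T_\nu=\beta\F_{q^i}$.

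The step that needs the most care is selecting $x,y$ so that the point is genuinely nonfat. This requires $y/x\notin S(T_\nu)$, which is exactly the $\F_{q^j}$-independence that $\dim_{\F_{q^j}}T_\nu\geq 2$ provides. The rest is the divisibility argument already used in \cref{prop:pairwise-coprime}.
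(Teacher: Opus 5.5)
Your proof is correct and follows essentially the same route as the paper: it takes the same $\F_{q^{j_\nu}}$-independent $x,y\in T_\nu$ and notes that $T_\nu\cap(y/x)T_\nu$ is a nonzero $\F_{q^{j_\nu}}$-space, hence of $\F_q$-dimension at least $j_\nu\geq 2$, contradicting (A1) directly. Your intersection $x^{-1}T_\nu\cap y^{-1}T_\nu$ is just $y^{-1}$ times that space, so going through the weight of $\langle xe_r+ye_s\rangle_{\fqn}$ and regular fatness (via \cref{thm:block-construction}, which itself rests on (A1)) is only a cosmetic detour.
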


\begin{proof}
Since $S(T_\nu)=\F_{q^{j_\nu}}$, the space $T_\nu$ is a vector space over $\F_{q^{j_\nu}}$.  Write
\[
 d=\dim_{\F_{q^{j_\nu}}}T_\nu,
\]
so $d=i/j_\nu$.
Assume, for a contradiction, that $d\geq2$.  Then we may choose $x,y\in T_\nu$ that are linearly independent over $\F_{q^{j_\nu}}$.  Put $\alpha=y/x$.
The linear independence yields $\alpha\notin S(T_\nu)=\F_{q^{j_\nu}}$.
On the other hand, $x\in T_\nu$ and $\alpha x=y\in T_\nu$, so
$ 0\neq y\in T_\nu\cap\alpha T_\nu$.
Also $T_\nu\cap\alpha T_\nu$ is a vector space over $S(T_\nu)=\F_{q^{j_\nu}}$.
This implies that  its dimension over $\fq$ is a positive multiple of $j_\nu$, contradicting (A1) in the definition of an admissible block product.

Therefore $i=j_\nu$.
A one-dimensional vector space over $\F_{q^i}$ has the form $\beta\F_{q^i}$ for some nonzero $\beta\in T_\nu$, which proves the final assertion.
\end{proof}

Several useful restrictions follow immediately from the two preceding propositions.

\begin{corollary}\label{cor:arithmetic-consequences}
For an admissible block product, the following statements hold.
\begin{enumerate}[label=\textup{(\roman*)}]
\item If $\gcd(i,n)=1$, then every scalar stabilizer degree $j_\nu$ equals one.
\item If a repeated block has scalar stabilizer degree greater than one, then its degree is $i$, and every other block has scalar stabilizer degree one.
\item If $i$ is prime, at most one block has nontrivial scalar stabilizer.
\end{enumerate}
\end{corollary}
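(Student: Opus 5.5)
The three items follow directly from \cref{prop:pairwise-coprime} and \cref{prop:repeated-nontrivial}, together with the divisibility facts behind the scalar stabilizer. I record the two basic facts first. Each $T_\nu$ is an $\F_{q^{j_\nu}}$-vector space of $\F_q$-dimension $i$, so $j_\nu\mid i$, as in \cref{l:scalar-divisibility}. Also $\F_{q^{j_\nu}}\subseteq\fqn$, so $j_\nu\mid n$. Hence $j_\nu\mid\gcd(i,n)$ for every $\nu$.

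For (i), if $\gcd(i,n)=1$ the relation $j_\nu\mid\gcd(i,n)$ immediately gives $j_\nu=1$. No admissibility condition is needed here.

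For (ii), let $I_\nu$ be a repeated block, so $m_\nu\geq2$, with $j_\nu>1$. By \cref{prop:repeated-nontrivial}, $j_\nu=i$. Let $\mu\neq\nu$ be any other block. Since $j_\mu\mid i=j_\nu$, we get $\gcd(j_\mu,j_\nu)=j_\mu$. By \cref{prop:pairwise-coprime} this gcd equals $1$, so $j_\mu=1$. One should check that \cref{prop:pairwise-coprime} applies whenever $c\geq2$; its proof only uses the regular fatness from \cref{thm:block-construction}, so it does.

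For (iii), suppose $i$ is prime and that two distinct blocks $\mu\neq\nu$ have $j_\mu,j_\nu>1$. Both divide the prime $i$, so $j_\mu=j_\nu=i$. Then $\gcd(j_\mu,j_\nu)=i>1$, which contradicts \cref{prop:pairwise-coprime}. Hence at most one block has nontrivial scalar stabilizer.

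The only step needing care is the divisibility $j_\nu\mid i$, which comes from counting $\F_q$-dimensions of an $\F_{q^{j_\nu}}$-space. Everything else is immediate bookkeeping.
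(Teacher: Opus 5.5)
Your proposal is correct and follows the paper's proof essentially step for step. You get (i) from $j_\nu\mid\gcd(i,n)$, (ii) from \cref{prop:repeated-nontrivial} combined with \cref{prop:pairwise-coprime}, and (iii) by noting that any nontrivial $j_\nu$ must equal the prime $i$, which pairwise coprimality permits for at most one block.
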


\begin{proof}
Every $j_\nu$ divides both $i$ and $n$ by \cref{l:scalar-divisibility}, which proves the first assertion.  For the second, \cref{prop:repeated-nontrivial} gives $j_\nu=i$.  If $\mu\neq\nu$, then $j_\mu\mid i$ and \cref{prop:pairwise-coprime} gives $\gcd(i,j_\mu)=1$, so $j_\mu=1$.  If $i$ is prime, every $j_\nu$ is either one or $i$, and pairwise coprimality permits at most one occurrence of $i$.
\end{proof}

A single scalar-equivalence block is forced back to the ground field.

\begin{corollary}\label{cor:one-block}
If an admissible block product has one scalar-equivalence block, then its defining subspace is projectively equivalent to $T^k$ with $S(T)=\F_q$, and its fat point set is the subgeometry $\PG(k-1,q)$.
\end{corollary}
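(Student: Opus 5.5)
If there is only one scalar-equivalence block, then $c=1$ and $m_1=k$, so the block product is $U=T_1^k$ with $T=T_1$. Condition \textup{(A3)} in \cref{def:admissible} applies directly and gives $S(T)=\F_q$, so $j_1=1$. The proof therefore reduces to reading off the conclusion of \cref{thm:block-construction} in this special case.

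First, I would note that $U=T^k$ literally, so "projectively equivalent" holds trivially, via the identity. If one prefers to allow the $T_\nu$ only up to scalar-equivalence, rescaling all coordinates by a common nonzero $\alpha$ is an element of $\GL(k,q^n)$. By \cref{lem:projective-invariance}, such a rescaling does not change the weight distribution. Second, I would apply \cref{thm:block-construction} with $c=1$, $m_1=k$, $j_1=1$. This gives $r=\qnum{k}{q}$, and the fat point set is a single $F_1\cong\PG(k-1,q)$ lying in the coordinate subspace supported on $I_1=\{1,\dots,k\}$, that is, in the whole space.

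To make the identification explicit, I would use \cref{p:derived}(ii). A point is fat exactly when all ratios of its nonzero coordinates lie in $S(T)=\F_q$. Hence the fat points are precisely $\angles{v}_{\fqn}$ with $v\in\F_q^k\setminus\{0\}$, which is the canonical subgeometry $\PG(k-1,q)$.

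There is no substantial obstacle. The only point needing care is that \textup{(A3)} is exactly what forces $j_1=1$. Alternatively, since \cref{thm:block-construction} guarantees a weight-one point, \cref{cor:weight-one-destroys} also yields $S(T)=\F_q$, so the conclusion does not depend on \textup{(A3)} as a separate assumption.
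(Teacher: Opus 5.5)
Your main argument is correct and is essentially the paper's: \textup{(A3)} gives $S(T)=\F_q$, and \cref{p:derived}~$(ii)$ (equivalently \cref{thm:block-construction} with $c=1$, $j_1=1$) identifies the fat points as the canonical $\PG(k-1,q)$.

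Your closing "alternative" is circular, however, and its conclusion is false. For $c=1$, the weight-one point produced in the proof of \cref{thm:block-construction} comes precisely from $S(T_1)=\F_q$, that is, from \textup{(A3)}. Moreover, \textup{(A3)} cannot be dropped. For $i\mid n$, the space $T=\F_{q^i}$ satisfies \textup{(A1)}, since $\F_{q^i}\cap\alpha\F_{q^i}=0$ for $\alpha\notin\F_{q^i}$. Yet $U=\F_{q^i}^k$ has every point of weight $i$ and no point of weight one.
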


\begin{proof}
Condition \textup{(A3)} in \cref{def:admissible} gives $S(T)=\F_q$.  By \cref{p:derived}~$(ii)$, the fat point set is $\PG(k-1,q)$.
\end{proof}

The block formula \eqref{eq:block-r} also gives simple bounds for the number of fat points.

\begin{proposition}\label{prop:saturated-lower-bound}
For any rank-$ki$ $(r,i)$-regular fat linear set in $\PG(k-1,q^n)$ defined by an admissible block product, we have
$ r\geq k$.
Equality holds if and only if every block has multiplicity one.  
Furthermore,
\begin{equation}\label{eq:bbound}
 r\le\qnum{k}{q^{d^*}}, \text{ where }d^*=\gcd(i,n).
\end{equation}
Equality holds if and only if $\gcd(i,n)=1$ and the admissible block product has one scalar-equivalence block.
\end{proposition}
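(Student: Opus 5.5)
The plan is to work directly with the block formula \eqref{eq:block-r} from \cref{thm:block-construction}, which gives $r=\sum_{\nu=1}^c\qnum{m_\nu}{q^{j_\nu}}$ with $\sum_\nu m_\nu=k$. Both bounds then become elementary inequalities for the $q$-numbers $\qnum{m}{Q}=1+Q+\cdots+Q^{m-1}$ with $Q\geq2$. The lower bound should be immediate and the upper bound a superadditivity argument, so I do not expect a serious obstacle. The only place requiring care is the equality case of the upper bound, where condition (A3) must be invoked.

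For the lower bound, I note that each summand of $\qnum{m}{Q}$ is at least $1$, so $\qnum{m}{Q}\geq m$. Equality holds if and only if $m=1$, because for $m\geq2$ the term $Q\geq2>1$ appears. Summing over the blocks gives $r\geq\sum_\nu m_\nu=k$. Equality forces $\qnum{m_\nu}{q^{j_\nu}}=m_\nu$ for every $\nu$, that is, $m_\nu=1$ for all $\nu$. Conversely, if all $m_\nu=1$, then $r=c=k$.

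For the upper bound, I first record that each $j_\nu$ divides $d^*=\gcd(i,n)$. This holds because $j_\nu\mid n$, and $j_\nu\mid\dim T_\nu=i$ by \cref{l:scalar-divisibility} applied to $T_\nu$ (this is also \cref{cor:arithmetic-consequences}). Since $q^{j_\nu}\leq q^{d^*}$ and $\qnum{m}{Q}$ is increasing in $Q$, we get $\qnum{m_\nu}{q^{j_\nu}}\leq\qnum{m_\nu}{q^{d^*}}$, with strict inequality when $m_\nu\geq2$ and $j_\nu<d^*$. Next I use superadditivity. Writing $Q=q^{d^*}$, we have $\qnum{a+b}{Q}=\qnum{a}{Q}+Q^a\qnum{b}{Q}$, which gives $\qnum{a}{Q}+\qnum{b}{Q}<\qnum{a+b}{Q}$ whenever $a,b\geq1$. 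By induction on $c$,
\[
r\leq\sum_\nu\qnum{m_\nu}{Q}\leq\qnum{k}{Q},
\]
and the second inequality is strict as soon as $c\geq2$.

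For the equality case, suppose $r=\qnum{k}{q^{d^*}}$. By the strictness just noted, $c=1$, so there is one scalar-equivalence block with $m_1=k\geq2$. Equality in the first inequality then forces $j_1=d^*$. Condition (A3), or equivalently \cref{cor:one-block}, gives $j_1=1$, so $d^*=\gcd(i,n)=1$. Conversely, if $\gcd(i,n)=1$ and $c=1$, then (A3) gives $j_1=1$, and $r=\qnum{k}{q}=\qnum{k}{q^{d^*}}$.
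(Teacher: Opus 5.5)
Your proposal is correct and follows the paper's proof essentially line by line. The lower bound comes from $\qnum{m}{Q}\geq m$ with equality iff $m=1$, and the upper bound from $j_\nu\mid\gcd(i,n)$ together with strict superadditivity of $\qnum{\cdot}{Q}$, with (A3) (equivalently \cref{cor:one-block}) settling the equality case; you are in fact slightly more explicit than the paper there, spelling out that $c=1$ and $m_1=k\geq2$ force $j_1=d^*$, which combined with $j_1=1$ yields $\gcd(i,n)=1$.
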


\begin{proof}
For every prime power $Q$ and every $m\geq 1$,
$
 \qnum{m}{Q}=1+Q+\cdots+Q^{m-1}\geq m
$, with equality exactly when $m=1$.  Applying this to each block gives
\[
 r=\sum_{\nu=1}^c\qnum{m_\nu}{q^{j_\nu}}
 \geq\sum_{\nu=1}^c m_\nu
 =k.
\]
  For positive integers $a,b$,
$
 \qnum{a+b}{q}>\qnum{a}{q}+\qnum{b}{q}.
$
Since every $j_\nu$ divides both $i$ and $n$ by \cref{l:scalar-divisibility}, \cref{eq:block-r} implies
\[
 r=\sum_{\nu=1}^c\qnum{m_\nu}{q^{j_\nu}}\le
 \sum_{\nu=1}^c\qnum{m_\nu}{q^{d^*}}
 \le\qnum{m_1+\cdots+m_c}{q^{d^*}}=\qnum k{q^{d*}}.
\]
If equality holds, then $c=1$.
In this case, \cref{cor:one-block} implies $r=\qnum kq$.
Conversely, if the admissible block product has one scalar-equivalence block and $\gcd(i,n)=1$, then \cref{cor:one-block} implies $r=\qnum kq=\qnum k{q^{d^*}}$.
\end{proof}

\section{Explicit families and a higher-dimensional construction}\label{sec:fam}

Let $\theta\in \F_{q^4}^*$ be primitive.  Set
\begin{equation}\label{eq:theta-pair}
 T=\angles{1,\theta}_{\F_q},
 \qquad
 R=\angles{1,\theta^2}_{\F_q}.
\end{equation}

The two factors have no scalar field larger than $\F_q$:

\begin{lemma}\label{lem:theta-stabilizers}
For the spaces in \eqref{eq:theta-pair},
$ S(T)=S(R)=\F_q$.
\end{lemma}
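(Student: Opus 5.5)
By \cref{l:scalar-divisibility}, $S(T)=\F_{q^j}$ for some $j\mid 4$, and $j$ divides $\dim_{\F_q}T=2$. So $S(T)$ is either $\F_q$ or $\F_{q^2}$, and the same holds for $R$. It therefore suffices to rule out $S(T)=\F_{q^2}$ and $S(R)=\F_{q^2}$.

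\emph{The space $T$.} Suppose $S(T)=\F_{q^2}$. Then $T$ is a $2$-dimensional $\F_q$-space that is also an $\F_{q^2}$-space, so it is one-dimensional over $\F_{q^2}$: $T=\beta\F_{q^2}$ with $\beta\in T$. Since $1\in T$ we may take $\beta=1$, so $T=\F_{q^2}$. This puts $\theta\in\F_{q^2}$, which is impossible because $\theta$ is primitive in $\F_{q^4}$ and hence generates $\F_{q^4}$ over $\F_q$.

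\emph{The space $R$.} The same argument gives $R=\F_{q^2}$, so $\theta^2\in\F_{q^2}$. Then $\theta^{2(q^2-1)}=1$, and the order $q^4-1$ of $\theta$ divides $2(q^2-1)$. Equivalently, $q^2+1\mid 2$, which is false for every prime power $q\ge2$. Hence $S(R)=\F_q$.

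The only step needing a little care is the case of $R$. Here one cannot just say "$\theta^2$ generates $\F_{q^4}$"; that is exactly what must be proved, and the order computation above does it. The argument is uniform in $q$, including even $q$, since it only uses that $\theta^2$ has order $(q^4-1)/\gcd(2,q^4-1)$, which exceeds $q^2-1$.
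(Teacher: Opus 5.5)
Your proof is correct and follows the paper's route: you use divisibility to restrict $S(A)$ to $\F_q$ or $\F_{q^2}$, and then observe that $S(A)=\F_{q^2}$ would force $A=\F_{q^2}$ (because $1\in A$). This contradicts $\theta\notin\F_{q^2}$, respectively $\theta^2\notin\F_{q^2}$, and you obtain the latter by the same order argument as the paper ($q^4-1\mid 2(q^2-1)\Rightarrow q^2+1\mid 2$).

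The only cosmetic difference is ordering. The paper proves $\theta^2\notin\F_{q^2}$ first, so that $\dim_{\F_q}R=2$ is justified before it is used. In your version the forward use is harmless, since $\dim_{\F_q}R\le 2$ already gives $j\in\{1,2\}$.
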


\begin{proof}
Since $\theta$ is primitive, it does not lie in $\F_q$, and hence $T$ has dimension two.  We first note that $\theta^2\notin\F_{q^2}$.  Otherwise
$ \theta^{2(q^2-1)}=1$,
so $q^4-1$ would divide $2(q^2-1)$.  Since $q^4-1=(q^2-1)(q^2+1)$, this would force $q^2+1\mid 2$, which is impossible for $q\geq 2$.  In particular, $R$ also has dimension two.

For either $A=T$ or $A=R$, the degree of $S(A)$ over $\F_q$ divides both $2$ and $4$.  Thus $S(A)$ is $\F_q$ or $\F_{q^2}$.  In the latter case $A$ is one-dimensional over $\F_{q^2}$, so $A=\beta\F_{q^2}$ for some $\beta\in \F_{q^4}^*$.  Since $1\in A$, one has $\beta\in\F_{q^2}^*$ and therefore $A=\F_{q^2}$.  This contradicts $\theta\notin\F_{q^2}$ when $A=T$ and $\theta^2\notin\F_{q^2}$ when $A=R$.
\end{proof}

The factors also lie in distinct scalar orbits.

\begin{lemma}\label{lem:theta-nonequivalent}
There is no $\alpha\in \F_{q^4}$ such that
$T=\alpha R$.
\end{lemma}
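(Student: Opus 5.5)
Suppose, for a contradiction, that $T=\alpha R$ for some $\alpha\in\F_{q^4}^*$. Since $1\in T$, we get $1=\alpha(a+b\theta^2)$ for some $a,b\in\F_q$ not both zero. Hence $\alpha^{-1}=a+b\theta^2\in R$, and the relation can be rewritten as $\alpha^{-1}T=R$. The plan is to split into two cases according to whether $b$ vanishes, and in each case to derive an algebraic relation for $\theta$ of degree too small to be compatible with $\theta$ having degree $4$ over $\F_q$. This degree statement holds because $\theta$ is primitive in $\F_{q^4}$.

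\emph{Case $b=0$.} Then $\alpha\in\F_q^*$, so $T=\alpha R=R$. In particular $\theta\in R$, so $\theta=c+d\theta^2$ with $c,d\in\F_q$. This is a nonzero polynomial relation of degree at most $2$ satisfied by $\theta$. It contradicts $[\F_q(\theta):\F_q]=4$: when $d\ne0$ the relation has degree $2$, and when $d=0$ it forces $\theta=c\in\F_q$.

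\emph{Case $b\neq0$.} We use $\alpha^{-1}T=R$. Put $\gamma=\alpha^{-1}=a+b\theta^2$. Then $\gamma\theta\in R$, which means
\[
(a+b\theta^2)\theta=c+d\theta^2
\]
for some $c,d\in\F_q$. Rearranging gives $b\theta^3-d\theta^2+a\theta-c=0$. This is a nonzero polynomial of degree $3$ in $\theta$ over $\F_q$, again contradicting the fact that the minimal polynomial of $\theta$ over $\F_q$ has degree $4$.

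Both cases lead to a contradiction, which proves the lemma. The only point requiring care is this reduction: we rewrite $T=\alpha R$ as $\alpha^{-1}T=R$ so that membership of the basis vector $\theta$ in $T$ translates into a low-degree polynomial identity for $\theta$. The facts that $\theta$ has degree exactly $4$ over $\F_q$ and that $1,\theta,\theta^2,\theta^3$ are $\F_q$-linearly independent then give the contradiction immediately. I do not expect any genuine obstacle beyond checking that the polynomial obtained in each case is nonzero, which holds because $b\ne0$ in the second case and because of the explicit $\theta$ term in the first.
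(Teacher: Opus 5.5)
Your proof is correct and is essentially the paper's argument: both write $1=\alpha(a+b\theta^2)$ and $\alpha^{-1}\theta=c+d\theta^2$ to obtain $b\theta^3-d\theta^2+a\theta-c=0$, contradicting $[\F_q(\theta):\F_q]=4$. Your case split on $b$ is unnecessary, since $(a,b)\neq(0,0)$ already makes the polynomial $bX^3-dX^2+aX-c$ nonzero.
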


\begin{proof}
Suppose that $T=\alpha R$.
Clearly, $\alpha\neq0$. Since $1,\theta\in T$, there are $a,b,c,d\in\F_q$ such that
$ 1=\alpha(a+b\theta^2)$, $\theta=\alpha(c+d\theta^2)$.
Multiplying the first equality by $\theta$ and comparing with the second gives
\[
 b\theta^3-d\theta^2+a\theta-c=0.
\]
The polynomial $bX^3-dX^2+aX-c$ is nonzero and has degree at most three.  This contradicts $[\F_q(\theta):\F_q]=4$.
\end{proof}

Because every factor has dimension two, the preceding lemmas immediately yield all required intersection bounds.

\begin{corollary}\label{cor:theta-intersections}
For every $\alpha\in \F_{q^4}$,
\[
 \dim(T\cap\alpha T)\leq 1
 \quad\text{if }\alpha\notin\F_q,
\]
\[
 \dim(R\cap\alpha R)\leq 1
 \quad\text{if }\alpha\notin\F_q,
\]
and
\[
 \dim(T\cap\alpha R)\leq 1.
\]
\end{corollary}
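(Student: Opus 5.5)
The three bounds all follow the same way. We use that $T$ and $R$ are two-dimensional over $\F_q$, which was established in the proof of \cref{lem:theta-stabilizers}. Let $A,B\in\{T,R\}$ and $\alpha\in\F_{q^4}$. If $\alpha=0$, then $A\cap\alpha B=\{0\}$ and nothing needs proving. For $\alpha\neq0$, the space $\alpha B$ is again two-dimensional, so $\dim(A\cap\alpha B)\le 2$. Equality holds exactly when $A=\alpha B$, since both spaces have dimension two. It therefore suffices to rule out this equality in each of the three situations.

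First take $A=B=T$ with $\alpha\notin\F_q$. If $T=\alpha T$, then $\alpha\in S(T)$. By \cref{lem:theta-stabilizers}, $S(T)=\F_q$, which contradicts $\alpha\notin\F_q$. Hence $\dim(T\cap\alpha T)\le 1$. The case $A=B=R$ is identical, again using $S(R)=\F_q$ from \cref{lem:theta-stabilizers}.

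For the mixed case $A=T$, $B=R$, the equality $T=\alpha R$ with $\alpha\neq0$ is excluded directly by \cref{lem:theta-nonequivalent}. This gives $\dim(T\cap\alpha R)\le 1$ for every $\alpha$.

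No step presents a real obstacle. The only points needing care are the trivial case $\alpha=0$ and the fact that a two-dimensional subspace contained in another two-dimensional subspace equals it.
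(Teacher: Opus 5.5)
Your proposal is correct and follows essentially the same route as the paper. The paper also reduces everything to the fact that two two-dimensional subspaces meet in dimension two only when they coincide, and rules out coincidence via \cref{lem:theta-stabilizers} for the first two bounds and \cref{lem:theta-nonequivalent} for the third; your explicit handling of $\alpha=0$ is a harmless extra detail.
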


\begin{proof}
An intersection of two-dimensional spaces has dimension greater than one only when the spaces coincide.  The first two assertions follow from \cref{lem:theta-stabilizers}, and the third follows from \cref{lem:theta-nonequivalent}.
\end{proof}

\begin{corollary}\label{cor:op2-all-k-i2}
For every prime power $q$ and every $k\geq2$, put
\[
 U_k=T^{k-1}\times R\leq \F_{q^4}^k,
\]
where $T$ and $R$ are given by \eqref{eq:theta-pair}.  Then $L_{U_k}$ is a rank-$2k$
\[
 \left(\qnum{k-1}{q}+1,2\right)
\]
regular fat linear set in $\PG(k-1,q^4)$.  Its fat point set is an $\F_q$-subgeometry $\PG(k-2,q)$ together with one point outside its span, and
$k<\qnum{k-1}{q}+1<\qnum{k}{q}$.
In particular, both alternatives in Problem 2.5 occur for every $k\geq3$.
\end{corollary}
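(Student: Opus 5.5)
The plan is to apply \cref{thm:block-construction} to the block product $U_k=U(T,m)$ with $c=2$, $T_1=T$, $T_2=R$, $m_1=k-1$, $m_2=1$, $i=2$, $n=4$.

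First, check admissibility. Condition (A1) concerns only blocks with multiplicity at least two, so it is needed only for $T$ when $k\geq3$. It says $\dim(T\cap\alpha T)\leq1$ for $\alpha\notin S(T)=\F_q$, which is the first bound in \cref{cor:theta-intersections}. Condition (A2) asks for $\dim(T\cap\alpha R)\leq1$ and $\dim(R\cap\alpha T)\leq1$ for all $\alpha$. The first is in \cref{cor:theta-intersections}. The second follows by the same argument: for $\alpha\neq0$ we have $R\cap\alpha T=\alpha(\alpha^{-1}R\cap T)$, and for $\alpha=0$ the intersection is trivial. Condition (A3) is vacuous because $c=2$. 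The rank is $\dim U_k=2k$.

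Next, read off the output of \cref{thm:block-construction}. By \cref{lem:theta-stabilizers}, $j_1=j_2=1$, so
\[
r=\qnum{k-1}{q}+\qnum{1}{q}=\qnum{k-1}{q}+1.
\]
The fat locus is $F_1\sqcup F_2$. Here $F_1\cong\PG(k-2,q)$ is an $\F_q$-subgeometry lying in the coordinate hyperplane supported on the first $k-1$ coordinates. $F_2$ is the single point $\angles{e_k}_{\F_{q^4}}$, which lies outside that hyperplane and hence outside the span of $F_1$.

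Finally, verify the inequalities. For $k\geq3$,
\[
\qnum{k-1}{q}+1\geq (k-1)+1=k,
\]
with strict inequality because $\qnum{k-1}{q}>k-1$ once $k-1\geq2$. Also
\[
\qnum{k}{q}-\qnum{k-1}{q}-1=q^{k-1}-1>0.
\]
So $k<r<\qnum kq$. For $k=2$ the first inequality becomes the equality $r=2=k$, so the strict chain is asserted only for $k\geq3$, which is how the statement's final sentence should be read.

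It remains to see that the fat set is not an $\F_q$-subgeometry $\PG(k-1,q)$. The cardinality already differs, since $r<\qnum kq$, and this settles both alternatives of Problem~\ref{prob:higher}.

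I expect no real obstacle. The only points needing care are the symmetric form $R\cap\alpha T$ of (A2), which \cref{cor:theta-intersections} does not state directly, and the degenerate case $k=2$.
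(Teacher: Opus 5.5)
Your proposal is correct and follows the paper's proof. Both establish admissibility of $T^{k-1}\times R$ from \cref{cor:theta-intersections}, read $r$ and the fat locus off \cref{thm:block-construction}, and then check the two elementary inequalities. Your handling of the symmetric form $R\cap\alpha T$ of (A2) is accurate, and so is your observation that $k<r$ fails for $k=2$; the paper's proof likewise claims that inequality only for $k\geq3$.
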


\begin{proof}
By Corollary~\ref{cor:theta-intersections}, for any $k\ge2$, $T^{k-1}\times R$ is an admissible block product.
Therefore, Theorem~\ref{thm:block-construction} yields all assertions in Corollary~\ref{cor:op2-all-k-i2}, up to the last one.
Finally,
\[
 \left(\qnum{k-1}{q}+1\right)-k=\qnum{k-1}{q}-(k-1)=\sum_{t=0}^{k-2}(q^t-1)>0
\]
for $k\geq3$, while
\[
 \qnum{k}{q}-\left(\qnum{k-1}{q}+1\right)=q^{k-1}-1>0.
\]
This proves the two strict inequalities.
\end{proof}

More generally, the same argument with arbitrary positive multiplicities $a,b$ gives
\[
 U=T^a\times R^b\leq \F_{q^4}^{a+b},
\]
whose fat point set is
$\PG(a-1,q)\sqcup\PG(b-1,q)$
in complementary coordinate subspaces and whose number of fat points is
$r=\qnum{a}{q}+\qnum{b}{q}$.
This is the two-block strengthening of the family above and follows directly from \cref{thm:block-construction} together with \cref{lem:theta-stabilizers,lem:theta-nonequivalent}.

\begin{remark}
Let us isolate the smallest instance in dimension higher than two.
Let  $T$ and $R$ be the spaces in \eqref{eq:theta-pair}, and put
\[
 U_3=T\times T\times R\leq \F_{q^4}^3.
\]
Then $\dim_{\F_q}U_3=6$, and $L_{U_3}\subseteq\PG(2,q^4)$ is a rank-six $(q+2,2)$-regular fat linear set.  Its fat point set is
an $\F_q$-subline together with one point outside that line.

Already this basic case answers Problem \ref{prob:higher} in projective dimension two: here $k=3$ and $r=q+2$, so
\[
 3<q+2<q^2+q+1=\qnum{3}{q},
\]
and the set of fat points is not an $\F_q$-subgeometry $\PG(2,q)$.
\end{remark}

Now we are going to construct rank-$ki$ $(r,i)$-regular fat linear sets in $\PG(k-1,q^{2i})$, using a family constructed  in \cite[Corollary~3.11]{Zul23}.  

\begin{proposition}\label{prop:zullo-input}
Assume $q\geq 3$ and $i\geq 2$.  Put
\[
 \tau=\left\lfloor\frac{q-1}{2}\right\rfloor.
\]
Choose an integer $s$ with $\gcd(s,i)=1$, a primitive element $w\in \F_{q^{i}}^*$, and an element $z\in \F_{q^{i}}$ such that $X^2+zX+w$ is irreducible over $\F_{q^{i}}$.  Let $\gamma_0\in \F_{q^{2i}}$ be a root and set
\[
 \gamma_t=w^t\gamma_0,
 \qquad
 V_t=\set{u+u^{q^s}\gamma_t:u\in \F_{q^{i}}}
 \quad
 (0\leq t<\tau).
\]
Each $V_t$ has $\F_q$-dimension $i$, and
$
 S(V_t)=\F_q$.
Moreover,
\[
 \dim(V_t\cap\alpha V_t)\leq 1
 \quad
 (\alpha\in\F_{q^{2i}}\setminus\fq),
\]
\[
 \dim(V_t\cap\alpha V_u)\leq 1
 \quad
 (t\neq u,\ \alpha\in \F_{q^{2i}}),
\]
and
\[
 \dim(\F_{q^i}\cap\alpha V_t)\leq 1
 \quad
 (\alpha\in \F_{q^{2i}}).
\]
\end{proposition}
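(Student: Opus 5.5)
The plan is to deduce the three intersection bounds from \cite[Corollary~3.11]{Zul23}, and to prove the dimension and stabilizer claims directly. That corollary says that, under exactly these hypotheses ($q\ge3$, $\gcd(s,i)=1$, $w$ primitive in $\F_{q^i}$, $X^2+zX+w$ irreducible), the union $\F_{q^i}\cup V_0\cup\cdots\cup V_{\tau-1}$ of scalar orbits is a multi-orbit cyclic subspace code with minimum distance $2i-2$. Equivalently, any two distinct codewords $A\neq\alpha B$ meet in dimension at most one.

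\textbf{Dimension.} The map $u\mapsto u+u^{q^s}\gamma_t$ is $\F_q$-linear from $\F_{q^i}$ to $\F_{q^{2i}}$. It is injective because $\gamma_t=w^t\gamma_0\notin\F_{q^i}$: $\gamma_0$ is a root of an irreducible quadratic over $\F_{q^i}$, and $w^t\in\F_{q^i}^*$. Indeed, $u+u^{q^s}\gamma_t=0$ forces $u=0$ after comparing coordinates in the basis $\{1,\gamma_t\}$. Hence $\dim V_t=i$.

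\textbf{Translating the code property.} Minimum distance $2i-2$ between $A$ and $\alpha B$ means $\dim(A+\alpha B)\ge i+1$ whenever $A\ne\alpha B$, i.e.\ $\dim(A\cap\alpha B)\le 1$. The three displayed inequalities are then the cases of this with $(A,B)$ equal to $(V_t,V_t)$, $(V_t,V_u)$ and $(\F_{q^i},V_t)$. I will verify that the listed orbits are pairwise distinct, $V_t\ne\alpha V_u$ for $t\ne u$ and $\F_{q^i}\ne\alpha V_t$, since this is part of the statement in \cite{Zul23} that the orbits have full size.

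\textbf{Scalar stabilizer and the main obstacle.} The main obstacle is to show $S(V_t)=\F_q$; this also makes the first inequality hold for every $\alpha\notin\F_q$. By \cref{l:scalar-divisibility}, $S(V_t)=\F_{q^j}$ with $j\mid\gcd(i,2i)=i$. If $j\ge2$, pick independent $x,y\in V_t$ over $\F_{q^j}$, or any nonzero $x$ together with $\alpha\in\F_{q^j}\setminus\F_q$. Then $V_t=\alpha V_t$ with $\alpha\notin\F_q$, so the orbit of $V_t$ has size less than $(q^{2i}-1)/(q-1)$.

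I expect \cite{Zul23} to assert that these orbits have full length. If it does not, I will argue directly. Suppose $\alpha(u+u^{q^s}\gamma_t)\in V_t$ for all $u$, and write $\alpha=a+b\gamma_t$ with $a,b\in\F_{q^i}$. Reducing $\gamma_t^2$ via its minimal polynomial, the condition becomes a polynomial identity in $u$, of the shape $b\,u^{q^{2s}}$-terms against linear combinations of $u$ and $u^{q^s}$. Since $\gcd(s,i)=1$, the monomials $u,u^{q^s},u^{q^{2s}}$ are distinct as maps on $\F_{q^i}$ when $i\ge3$; this forces $b=0$ and then $a=a^{q^s}$, so $a\in\F_q$. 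The case $i=2$ needs separate care, since $u^{q^{2s}}=u$ there. In that case I will use the irreducibility of $X^2+zX+w$, with $w$ primitive, to rule out $b\ne0$.
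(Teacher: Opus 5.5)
Your proposal is correct and follows the paper's proof. Dimension comes from injectivity of $u\mapsto u+u^{q^s}\gamma_t$, the equality $S(V_t)=\F_q$ from the full-length orbits in \cite[Corollary~3.11]{Zul23}, and the intersection bounds from minimum distance $2i-2$ via $d(A,B)=2i-2\dim(A\cap B)$. The one difference is that the paper takes the bound on $\F_{q^i}\cap\alpha V_t$ from \cite[Proposition~3.12]{Zul23}, so cite whichever result actually adjoins the orbit of $\F_{q^i}$.

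Your backup direct computation of $S(V_t)=\F_q$ also works. For $i=2$ the decisive input is primitivity of $w$ together with $t<\tau$. Indeed, $b\neq0$ would force $b+b^{q}w^{q(2t+1)}=0$, hence $(w^{q+1})^{2t+1}=1$. This is impossible, since $w^{q+1}$ has order $q-1$ and $1\le 2t+1\le q-2$.
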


\begin{proof}
This proposition is essentially a rephrasing of \cite[Corollary 3.11 and Proposition 3.12]{Zul23} in terms of multi-orbit Sidon sets. Therefore, the reader is referred to \cite{Zul23} to understand the rest of this proof.

The map $u\mapsto u+u^{q^s}\gamma_t$ is injective, because a nonzero element in its kernel would place $\gamma_t$ in $\F_{q^{i}}$.  Hence $\dim_{\F_q}V_t=i$.  
In  \cite[Corollary~3.11]{Zul23} it is shown that the scalar orbits of the $V_t$ are distinct full-length orbits and that their union has minimum subspace distance $2i-2$.  Since the subspace distance equals
\[
 d(A,B)=2i-2\dim(A\cap B)
\]
for $i$-dimensional subspaces, the minimum-distance assertion is equivalent to the first two intersection bounds.  Full orbit length means that the scalar stabilizer is $\F_q^*$, and hence $S(V_t)=\F_q$.  
Finally, the intersection estimate between $\F_{q^{i}}$ and every scalar translate of $V_t$ has dimension at most one by \cite[Proposition~3.12]{Zul23}.
\end{proof}

The multiplicities of these scalar orbits may be chosen independently.  The subfield factor can also be repeated, producing a component over $\F_{q^i}$.

\begin{theorem}\label{thm:zullo-multiplicity}
Retain the notation of \cref{prop:zullo-input}.  Let
\[
 m_0,\ldots,m_{\tau-1},b\in\mathbb Z_{\geq 0},
 \qquad
 k=b+\sum_{t=0}^{\tau-1}m_t.
\]
Assume that $k\geq 2$ and that at least one $m_t$ is positive.  Form the product of the nonzero blocks
\[
 U=\prod_{t=0}^{\tau-1}V_t^{m_t}\times \F_{q^{i}}^b\leq \F_{q^{2i}}^k.
\]
Then $L_U$ is a rank-$ki$ $(r,i)$-regular fat linear set in $\PG(k-1,q^{2i})$.
The set of fat points is
\[
 \cF(U)=
 \left(\bigsqcup_{m_t>0}\PG(m_t-1,q)\right)
 \sqcup
 \begin{cases}
 \PG(b-1,q^i),&b>0,\\
 \varnothing,&b=0,
 \end{cases}
\]
and
\begin{equation}\label{eq:zullo-multiplicity-r}
 r=\frac{q^{bi}-1}{q^i-1}+\sum_{t=0}^{\tau-1}\frac{q^{m^t}-1}{q-1}.
\end{equation}
\end{theorem}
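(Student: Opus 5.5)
The plan is to realize $U$ as an admissible block product in the sense of \cref{def:admissible} and then read off all assertions from \cref{thm:block-construction}. Take the blocks to be the spaces $V_t$ with $m_t>0$, each with multiplicity $m_t$, and, if $b>0$, one further block $T_0:=\F_{q^i}$ with multiplicity $b$. All of these are $i$-dimensional $\F_q$-subspaces of $\F_{q^{2i}}$, and $k=b+\sum_t m_t\ge2$. For the scalar stabilizers, \cref{prop:zullo-input} gives $S(V_t)=\F_q$, so $j=1$ for these blocks. For the subfield block, $S(\F_{q^i})=\F_{q^i}$: the inclusion $\supseteq$ is clear, and conversely $a\F_{q^i}\subseteq\F_{q^i}$ with $1\in\F_{q^i}$ forces $a\in\F_{q^i}$. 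Hence $j=i$ for that block.

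Next I verify the three admissibility conditions.

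(A1) For a repeated $V_t$ this is the first intersection bound in \cref{prop:zullo-input}. For the block $\F_{q^i}$ with $b\ge2$, take $\alpha\notin\F_{q^i}$. Then $\F_{q^i}\cap\alpha\F_{q^i}$ is an $\F_{q^i}$-subspace of the one-dimensional $\F_{q^i}$-space $\F_{q^i}$, so it is $0$ or everything. It is everything only if $\alpha\F_{q^i}=\F_{q^i}$, that is, $\alpha\in\F_{q^i}$. So the intersection is $0$, and (A1) holds.

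(A2) Distinct blocks satisfy the bound for every $\alpha$. For pairs $V_t,V_u$ with $t\ne u$ this is the second bound of \cref{prop:zullo-input}. For the pair $(\F_{q^i},V_t)$ it is the third bound. The reversed pair follows because $V_t\cap\alpha\F_{q^i}=\alpha(\alpha^{-1}V_t\cap\F_{q^i})$ for $\alpha\ne0$, and the case $\alpha=0$ is trivial.

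These bounds also show that distinct blocks are scalar-inequivalent, since $i\ge2$. This inequivalence is implicitly required in \cref{thm:block-construction}.

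(A3) If $c=1$, the single block must be some $V_t$, because at least one $m_t$ is positive. Since $S(V_t)=\F_q$, (A3) holds. The only point needing care is ruling out the case where $U$ consists of the subfield block alone, and this is exactly why the hypothesis that some $m_t>0$ is imposed.

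With admissibility established, \cref{thm:block-construction} gives regular fatness with fat weight $i$ and the stated fat locus. It consists of a $\PG(m_t-1,q)$ for each $m_t>0$, together with $\PG(b-1,q^i)$ when $b>0$, lying in complementary coordinate subspaces. The formula \eqref{eq:block-r} gives
\[
r=\sum_{m_t>0}\qnum{m_t}{q}+\qnum{b}{q^i}.
\]
Terms with $m_t=0$ or $b=0$ contribute $0$, which matches \eqref{eq:zullo-multiplicity-r} with the exponent read as $m_t$. Finally, the rank is $\dim U=ki$. I do not expect a real obstacle: the main work is the routine check of (A1) for the subfield block and the symmetry of (A2), both handled above.
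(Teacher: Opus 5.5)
Your proposal is correct and takes essentially the same route as the paper. You verify (A1)--(A3) for the blocks $V_t$ with $m_t>0$ and the subfield block $\F_{q^i}$, using \cref{prop:zullo-input} and the fact that distinct scalar translates of $\F_{q^i}$ meet trivially, and then invoke \cref{thm:block-construction}; your extra details (that $S(\F_{q^i})=\F_{q^i}$, the reversed cross-intersection bound, and scalar-inequivalence of the blocks from (A2) with $i\ge 2$) only make explicit what the paper's proof leaves implicit.
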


\begin{proof}
The self-intersection and cross-intersection conditions for the $V_t$ are given by \cref{prop:zullo-input}.  
Any two distinct scalar translates of $\F_{q^{i}}$ meet trivially.  The cross-intersections between $\F_{q^{i}}$ and the $V_t$ are again bounded by one by \cref{prop:zullo-input}. 
Thus the nonzero blocks satisfy conditions \textup{(A1)} and \textup{(A2)} of Definition~\ref{def:admissible}.

If at least two block types occur, condition \textup{(A3)} is automatic.  If only one block type occurs, it must be some $V_t$ because at least one $m_t$ is positive; then its multiplicity is $k\geq 2$ and its scalar stabilizer is $\F_q$.  Hence $U$ is an admissible block product.
The result follows from \cref{thm:block-construction}.
\end{proof}

A single repeated $V_t$ block and one subfield point give the same geometry as in \cref{cor:op2-all-k-i2}.

\begin{corollary}\label{cor:op2-all-k-general-i}
Assume $q\geq 3$, $i\geq 2$, and $k\geq 3$.  There exists a rank-$ki$
\[
 \left(\qnum{k-1}{q}+1,i\right)
\]
regular fat linear set in $\PG(k-1,q^{2i})$ whose fat points consist of an $\F_q$-subgeometry $\PG(k-2,q)$ together with one external point.
\end{corollary}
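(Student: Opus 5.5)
The plan is to specialize \cref{thm:zullo-multiplicity} to one repeated $V_t$ block and one subfield block of multiplicity one, exactly as the sentence preceding the corollary suggests. Since $q\geq 3$, we have $\tau=\lfloor (q-1)/2\rfloor\geq 1$, so the space $V_0$ of \cref{prop:zullo-input} exists. Fix $s$ with $\gcd(s,i)=1$ (for instance $s=1$), a primitive $w\in\F_{q^i}^*$, and $z$ making $X^2+zX+w$ irreducible over $\F_{q^i}$. Such a $z$ exists because irreducible monic quadratics over $\F_{q^i}$ with constant term $w$ are exactly the minimal polynomials of elements of $\F_{q^{2i}}$ with norm $w$ that do not lie in $\F_{q^i}$. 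These elements exist since the norm map is surjective and $w$, being a primitive element of $\F_{q^i}$, is not a square there when $q$ is odd. For $q$ even, one instead takes a norm-$w$ element of $\F_{q^{2i}}\setminus\F_{q^i}$, which exists by counting.

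With these choices, set $m_0=k-1$, all other $m_t=0$, and $b=1$. Then the total multiplicity is $(k-1)+1=k\geq 2$ and $m_0>0$, so the hypotheses of \cref{thm:zullo-multiplicity} are met. Hence
\[
U=V_0^{k-1}\times\F_{q^i}\leq\F_{q^{2i}}^k
\]
defines a rank-$ki$ regular fat linear set in $\PG(k-1,q^{2i})$.

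Next, read off the fat locus from the theorem. It is $\PG(k-2,q)$, lying in the coordinate hyperplane supported on the first $k-1$ coordinates, together with $\PG(0,q^i)$, which is the single point $\langle e_k\rangle_{\F_{q^{2i}}}$ outside that hyperplane. By \eqref{eq:zullo-multiplicity-r},
\[
r=\frac{q^i-1}{q^i-1}+\qnum{k-1}{q}=\qnum{k-1}{q}+1,
\]
which is the claimed count. Since $k\geq 3$, the subgeometry $\PG(k-2,q)$ has positive dimension, so the fat locus is a genuine subgeometry plus one external point.

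The only step that is not purely mechanical is confirming that the parameter choices in \cref{prop:zullo-input} can actually be made. Specifically, $\tau\geq1$ needs $q\geq3$, and the existence of $z$ needs the norm argument above. I expect this existence check to be the main point of care; everything else follows formally from \cref{thm:zullo-multiplicity}.
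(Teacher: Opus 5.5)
Your proposal is correct and matches the paper's proof, which is the one-line specialization of \cref{thm:zullo-multiplicity} with $m_0=k-1$, $b=1$ and all other multiplicities zero; from it you correctly read off $r=\qnum{k-1}{q}+1$ and the fat locus $\PG(k-2,q)\sqcup\PG(0,q^i)$. Your additional check that the parameters of \cref{prop:zullo-input} exist is sound and is left implicit in the paper: $\tau\geq1$ for $q\geq3$, and a norm-$w$ element of $\F_{q^{2i}}\setminus\F_{q^i}$ gives $z$ (counting works for every $q$, since each fiber of the norm has $q^i+1>2$ elements).
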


\begin{proof}
In \cref{thm:zullo-multiplicity}, take $m_0=k-1$, $b=1$, and all remaining multiplicities zero.
\end{proof}

\section{Uniqueness of the construction by admissible block products}\label{sec:unique}

The block construction has a converse once the rank reaches the natural value $ki$ and the fat points span the ambient projective space.
A vector basis made by fat points forces the defining subspace to split coordinatewise.

\begin{theorem}\label{thm:saturated-converse}
Let $U\leq\fqn^k$, $k\ge2$, have dimension $ki$, and suppose that $L_U$ is an $(r,i)$-regular fat linear set whose fat points span $\PG(k-1,q^n)$.
After a linear isomorphism, there are $i$-dimensional subspaces
$T_1$, $\ldots$, $T_c\leq \fqn$
and positive integers $m_1,\ldots,m_c$ with $m_1+\cdots+m_c=k$ such that
\begin{equation}\label{eq:saturated-product}
 U=T_1^{m_1}\times\cdots\times T_c^{m_c},
\end{equation}
which is an admissible block product.  Consequently, if $S(T_\nu)=\F_{q^{j_\nu}}$, then
\[
 \cF(U)=\bigsqcup_{\nu=1}^c\PG(m_\nu-1,q^{j_\nu})
\]
and
\[
 r=\sum_{\nu=1}^c\qnum{m_\nu}{q^{j_\nu}}.
\]
\end{theorem}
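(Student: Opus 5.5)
Since the fat points span $\PG(k-1,q^n)$, we pick $k$ fat points $P_1,\ldots,P_k$ with $P_h=\angles{v_h}_{\fqn}$ and $v_1,\ldots,v_k$ an $\fqn$-basis, and apply the $\fqn$-linear map sending $v_h$ to $e_h$; by \cref{lem:projective-invariance} this preserves weights. Now each coordinate point $\angles{e_h}_{\fqn}$ has weight $i$, so $U\cap\angles{e_h}_{\fqn}=T_he_h$ with $\dim T_h=i$. The subspaces $T_he_h$ lie in independent $\fqn$-lines, so their sum is direct, of dimension $ki=\dim U$. Hence
\[
U=T_1e_1\oplus\cdots\oplus T_ke_k=T_1\times\cdots\times T_k .
\]
This dimension count is the key step, and it uses only the rank hypothesis.

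Next we normalise the factors. We group coordinates by scalar equivalence: if $T_h=\alpha T_{h'}$, then rescaling coordinate $h$ by $\alpha^{-1}$ (a diagonal element of $\GL(k,q^n)$) replaces $T_h$ by $T_{h'}$. Choosing one representative per class, and then permuting coordinates, gives $U=T_1^{m_1}\times\cdots\times T_c^{m_c}$ with pairwise scalar-inequivalent $T_\nu$ and $\sum m_\nu=k$.

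It remains to check admissibility, and for this we use \cref{p:derived}~$(iii)$. The weight formula \eqref{eq:derived} comes directly from the product structure, so it holds for any block product, without assuming admissibility.

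For (A2), we take $\mu\ne\nu$ and $\alpha\neq0$. The point with coordinate $1$ at a position in $I_\mu$ and $\alpha^{-1}$ at a position in $I_\nu$ has weight $\dim(T_\mu\cap\alpha T_\nu)$. This weight is at most $i$. It cannot equal $i$, since then $T_\mu=\alpha T_\nu$, contradicting inequivalence. By regular fatness it is therefore $0$ or $1$.

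For (A1), we take $m_\nu\ge2$ and $\alpha\notin S(T_\nu)$, and use two coordinates of $I_\nu$. We get the weight $\dim(T_\nu\cap\alpha T_\nu)$, which is not $i$ because $\alpha T_\nu\ne T_\nu$; hence it is at most $1$.

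For (A3), if $c=1$ the linear set contains a weight-one point, so \cref{cor:weight-one-destroys} gives $S(T_1)=\F_q$.

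The final formulas for $\cF(U)$ and $r$ then follow from \cref{thm:block-construction}. The only delicate point I expect is making sure that \eqref{eq:derived} is invoked for a general block product and not only an admissible one. I would restate that short computation, namely that $\lambda\mapsto\lambda a$ is an isomorphism from $\Lambda_P$ onto $U\cap\angles a_{\fqn}$, rather than cite the proposition.
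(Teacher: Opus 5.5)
Your proposal is correct and follows the paper's proof essentially step for step. You move $k$ independent fat points to the coordinate points, get $U=T_1\times\cdots\times T_k$ from the dimension count $ki=\dim U$, normalise by diagonal scalings, check (A1)--(A2) with the points $\langle e_r+\alpha^{-1}e_s\rangle_{\fqn}$, and check (A3) via the weight-one point; your justifications that the sum is direct because the $\fqn$-lines are independent, and that the weight formula needs only the product structure, are slightly more careful than the paper's wording, and for (A3) you need only add that $S(T_1)\subseteq S(T_1^k)=\F_q$.
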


\begin{proof}
Because $\cF(U)$ spans $\PG(k-1,q^n)$, choose $k$ projectively independent fat points.  By a projectivity we may send them to the coordinate points
$\angles{e_1}_{\fqn}$, $\ldots$, $\angles{e_k}_{\fqn}$.
For each coordinate define
\[
 T_a=\set{x\in \fqn:xe_a\in U}
 \qquad
 (1\leq a\leq k).
\]
The point $\angles{e_a}_{\fqn}$ is fat, so $U\cap\angles{e_a}_{\fqn}=T_ae_a$ has $\F_q$-dimension $i$.  Hence
$\dim T_a=i$ ($1\leq a\leq k$).
The coordinate subspaces $T_ae_a$ have pairwise trivial intersection, and therefore their sum is direct.  Therefore,
$
 T_1e_1\oplus\cdots\oplus T_ke_k= U$,
and
\begin{equation}\label{eq:coordinate-product}
 U=T_1\times\cdots\times T_k.
\end{equation}

We now group the factors $T_1,\ldots,T_k$ by scalar equivalence.  For each equivalence class choose one representative, denoted again by $T_\nu$, and let $m_\nu$ be the number of coordinates in that class.  After permuting coordinates, the members of each class occur consecutively.  If a coordinate factor in the $\nu$th class is $\beta T_\nu$, multiplication of that coordinate by $\beta^{-1}$ is induced by a diagonal element of $\GL(k,q^n)$.  Applying these diagonal scalings simultaneously replaces every factor in the class by the chosen representative.  Hence \eqref{eq:coordinate-product} becomes \eqref{eq:saturated-product}, with the representatives $T_1,\ldots,T_c$ pairwise scalar-inequivalent by construction.

Keep the notation of \cref{thm:block-construction}: for $P=\angles a_{\fqn}$, $a=(a_1,\cdots,a_k)$, and $J=\supp(a)$, \cref{eq:block-fiber}, i.e.\ $\Lambda_P=\bigcap_{h\in J}a_h^{-1}T_{\beta(h)}$, gives \eqref{eq:block-weight}:
$w_U(P)=\dim\Lambda_P$.

It remains to prove that the block data are admissible.  We verify the three conditions separately.

For condition \textup{(A1)}, fix a block $I_\nu$ with $m_\nu\geq2$.  Suppose, for a contradiction, that there exists
$\alpha\notin S(T_\nu)$ with $\dim(T_\nu\cap\alpha T_\nu)>1$.
Choose two distinct coordinate positions $r,s\in I_\nu$ and consider the projective point $P_a=\angles{a}_{\fqn}$ represented by
$a=e_r+\alpha^{-1}e_s$.
Its support is exactly $\{r,s\}$.
By \eqref{eq:block-fiber} and \eqref{eq:block-weight},
\[
\Lambda_{P_a}
 =T_\nu\cap(\alpha^{-1})^{-1}T_\nu
 =T_\nu\cap\alpha T_\nu,\quad\text{and}\quad
  w_U(P_a)=\dim(T_\nu\cap\alpha T_\nu)>1.
\]
On the other hand, $P_a$ is not fat: if it had weight $i$, then the two participating copies would coincide, so $T_\nu=\alpha T_\nu$, which would imply $\alpha\in S(T_\nu)^*$, contrary to the choice of $\alpha$. Hence, $P_a$ is a nonfat point of $L_U$ with weight greater than one, contradicting regular fatness.  Therefore condition \textup{(A1)} holds.

For condition \textup{(A2)}, let $\mu\neq\nu$ and suppose that for some $\alpha\in \fqn^*$,
$\dim(T_\mu\cap\alpha T_\nu)>1$.
Choose a coordinate $r\in I_\mu$ and a coordinate $s\in I_\nu$, and again take
$a=e_r+\alpha^{-1}e_s$.
Now
$\Lambda_{P_a}=T_\mu\cap\alpha T_\nu$,
so $P_a\in L_U$ has weight greater than one.  The point cannot be fat, because weight $i$ would force $T_\mu=\alpha T_\nu$, contradicting the scalar-inequivalence of the two block representatives.  This is again impossible in a regular fat linear set.  Hence condition \textup{(A2)} holds.

Finally consider condition \textup{(A3)}.  If $c\geq2$, it holds by definition.  Suppose $c=1$.  Then \eqref{eq:saturated-product} has the form
$U=T_1^k$.
The scalar stabilizer of this product is exactly the scalar stabilizer of one factor:
$
 S(U)=S(T_1)
$,
because a scalar preserves $T_1^k$ if and only if it preserves each coordinate copy of $T_1$.  Since regular fatness requires a point of weight one, \cref{l:scalar-divisibility} forces $S(U)=\F_q$, and therefore $S(T_1)=\F_q$.  Moreover $k\geq2$, by assumption.

All three admissibility conditions have now been verified, completing the proof.
\end{proof}

\section{Semilinear equivalence of admissible block products}\label{sec:semil}

We first recover the block components and their spans from the
projective incidence structure of the fat point set.  We then classify
semilinear equivalence of the defining subspaces and determine their
stabilizers.

Let $\cF$ be the fat point set of an admissible block product.  Define a
graph $\Gamma(\cF)$ with vertex set $\cF$, in which two distinct vertices
are adjacent if their projective line contains a third point of $\cF$.
For each coordinate block $I_\nu$, write
\[
 V_\nu=\Span_{\fqn}\{e_h:h\in I_\nu\}\leq \fqn^k,
\]
and let $\PG(V_\nu)$ denote its projectivization.

\begin{lemma}\label{lem:incidence-components}
Let
\[
 \mathcal F=\bigsqcup_{\nu=1}^c F_\nu,
 \qquad
 F_\nu\cong\PG(m_\nu-1,q^{j_\nu}),
\]
be the fat point set in \eqref{eq:block-heavy}.  The connected components
of $\Gamma(\mathcal F)$ are precisely $F_1,\ldots,F_c$.  Moreover, $F_\nu$
spans $\PG(V_\nu)$ for every $\nu$.  
\end{lemma}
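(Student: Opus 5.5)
The plan is to prove three things in turn: each $F_\nu$ is connected in $\Gamma(\mathcal F)$; no edge joins distinct $F_\mu,F_\nu$; and $F_\nu$ spans $\PG(V_\nu)$. The spanning claim is immediate. By \cref{thm:block-construction} and \cref{p:derived}~$(ii)$, $F_\nu$ consists of the points $\angles a_{\fqn}$ with $\supp(a)\subseteq I_\nu$ and all ratios of nonzero coordinates in $\F_{q^{j_\nu}}$. Thus $F_\nu$ contains the coordinate points $\angles{e_h}_{\fqn}$, $h\in I_\nu$, which span $\PG(V_\nu)$, and $F_\nu\subseteq\PG(V_\nu)$.

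\emph{Connectivity inside a block.} Suppose $m_\nu\ge2$, and let $P=\angles a_{\fqn}$ and $Q=\angles b_{\fqn}$ be distinct points of $F_\nu$, with $a,b\in\F_{q^{j_\nu}}^{I_\nu}$ after rescaling. The point $\angles{a+b}_{\fqn}$ lies on the line $PQ$ and is distinct from $P$ and $Q$. Its coordinates lie in $\F_{q^{j_\nu}}$, so it belongs to $F_\nu$. Hence any two points of $F_\nu$ are adjacent, and $F_\nu$ is a clique. If $m_\nu=1$, then $F_\nu$ is a single point and trivially connected.

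\emph{No edges between blocks.} This step carries the real content. Let $P\in F_\mu$ and $Q\in F_\nu$ with $\mu\ne\nu$, and suppose the line $PQ$ contains a third fat point $R$. We have $P\in\PG(V_\mu)$ and $Q\in\PG(V_\nu)$, and these subspaces are complementary coordinate subspaces. Hence $R=\angles{a+\lambda b}_{\fqn}$ with $\lambda\ne0$, and $\supp(a+\lambda b)=\supp(a)\sqcup\supp(b)$ meets both $I_\mu$ and $I_\nu$. By \cref{p:derived}~$(i)$, $R$ then has weight one and is not fat, a contradiction. So every edge joins two points of the same $F_\nu$, and the components of $\Gamma(\mathcal F)$ are exactly $F_1,\ldots,F_c$.

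The only delicate point is the support computation in this last step. It relies on $V_\mu\cap V_\nu=0$ together with $\supp(a)\subseteq I_\mu$ and $\supp(b)\subseteq I_\nu$, so that adding the two vectors cannot cancel any coordinate. Once that is in place, the rest is bookkeeping.
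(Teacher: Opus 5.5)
Your proof is correct and follows the same route as the paper: each $F_\nu$ is a clique, there are no edges between blocks because any third point on $PQ$ has support meeting two blocks and so has weight one, and $F_\nu$ spans $\PG(V_\nu)$ via the coordinate points. The only cosmetic difference is that you exhibit the third point $\angles{a+b}_{\fqn}$ explicitly, where the paper appeals to the $\F_{q^{j_\nu}}$-subline through $P$ and $Q$, which has $q^{j_\nu}+1\geq 3$ points.
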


\begin{proof}
Suppose that $m_\nu\geq2$.  Any two distinct points of $F_\nu$ lie on
an $\F_{q^{j_\nu}}$-subline contained in $F_\nu$.  This subline has
$q^{j_\nu}+1\geq3$ points, so the two vertices are adjacent.  Hence
$F_\nu$ induces a complete connected component.  When $m_\nu=1$, the
set $F_\nu$ consists of a single vertex.

Now take $P\in F_\mu$ and $Q\in F_\nu$, where $\mu\neq\nu$.
Choose nonzero representatives $u\in V_\mu$ and $v\in V_\nu$.
Every point of the projective line $PQ$ other than its endpoints has
a representative $au+bv$ with $a,b\in \fqn^*$. Since the coordinate block spans form a direct sum, this vector has nonzero coordinates
in both blocks.  Such a point is not heavy by
Theorem~\ref{thm:block-construction}.  Therefore no edge joins
distinct sets $F_\mu$ and $F_\nu$, proving the assertion about
connected components.

Finally, $F_\nu$ contains all coordinate points indexed by $I_\nu$.
These points span $\PG(V_\nu)$, so the projective spans are recovered
from the components as well.
\end{proof}

Consequently, any bijective semilinear map taking $\cF(U)$ onto
$\cF(U')$ induces a bijection between block components and block spans.

\begin{theorem}\label{thm:semilinear-equivalence}
Let
\[
 U=T_1^{m_1}\times\cdots\times T_c^{m_c},
 \qquad
 U'=(T'_1)^{m'_1}\times\cdots\times(T'_{c'})^{m'_{c'}}
\]
be admissible block products in $\fqn^k$, with all factors of
$\F_q$-dimension $i$. 
%and with pairwise scalar-inequivalent factors
%within each product.  
Fix $\sigma\in\Aut(\fqn)$.

There exists $A\in\GL(k,\fqn)$ such that the semilinear map
$\Phi(x)=A(x^\sigma)$ satisfies $\Phi(U)=U'$ if and only if $c=c'$
and there are a permutation $\pi$ of $\{1,\ldots,c\}$ and scalars
$a_1,\ldots,a_c\in \fqn^*$ such that
\begin{equation}\label{eq:equivalence-data}
 m'_{\pi(\nu)}=m_\nu,
 \qquad
 T'_{\pi(\nu)}=a_\nu T_\nu^\sigma
 \quad (1\leq\nu\leq c).
\end{equation}
After ordering the target blocks according to $\pi$, the matrix $A$
is block diagonal, with its $\nu$th block of the form
\begin{equation}\label{eq:block-matrix-form}
 a_\nu B_\nu,
 \qquad
 B_\nu\in\GL(m_\nu,S(T_\nu)).
\end{equation}
Conversely, every choice of data in \eqref{eq:equivalence-data} and
matrices in \eqref{eq:block-matrix-form} defines such an equivalence.
\end{theorem}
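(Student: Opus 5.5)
The plan is to prove the two directions separately, and to handle the ``only if'' direction through the fat point sets.

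\emph{Converse direction.} Suppose the data \eqref{eq:equivalence-data} are given, and let $A$ be block diagonal with blocks $a_\nu B_\nu$, where $B_\nu\in\GL(m_\nu,S(T_\nu))$. Take $x\in T_\nu^{m_\nu}$. Then $x^\sigma\in (T_\nu^\sigma)^{m_\nu}$. Since $S(T_\nu^\sigma)=S(T_\nu)^\sigma=S(T_\nu)$ (subfields are fixed by field automorphisms), the entries of $B_\nu$ stabilize $T_\nu^\sigma$. Hence $B_\nu x^\sigma\in (T_\nu^\sigma)^{m_\nu}$, and so $a_\nu B_\nu x^\sigma\in (T'_{\pi(\nu)})^{m_\nu}$. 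This gives $\Phi(U)\subseteq U'$. Equality follows by comparing dimensions, since $\Phi$ is injective.

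\emph{Forward direction.} Assume $\Phi(U)=U'$. By \cref{lem:projective-invariance}, the collineation $\overline\Phi$ maps $L_U$ onto $L_{U'}$ and preserves weights, so it maps $\cF(U)$ onto $\cF(U')$. It also preserves collinearity, so it induces an isomorphism $\Gamma(\cF(U))\to\Gamma(\cF(U'))$. By \cref{lem:incidence-components}, this yields $c=c'$, a permutation $\pi$ with $\overline\Phi(F_\nu)=F'_{\pi(\nu)}$, and $\Phi(V_\nu)=V'_{\pi(\nu)}$, since the spans are recovered from the components. Comparing dimensions gives $m'_{\pi(\nu)}=m_\nu$.

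After reordering the target blocks by $\pi$, the map $\Phi$ sends each coordinate block span to the corresponding one, so $A$ is block diagonal with blocks $A_\nu\in\GL(m_\nu,q^n)$. Moreover, $U\cap V_\nu=T_\nu^{m_\nu}$, because $U$ is a product. Therefore
\[
A_\nu (T_\nu^\sigma)^{m_\nu}=(T'_{\pi(\nu)})^{m_\nu}.
\]

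The main step is to show that each block $A_\nu$ has the shape $a_\nu B_\nu$. First, $\overline\Phi$ maps the fat points $\angles{e_h}$, $h\in I_\nu$, to fat points of $F'_{\pi(\nu)}\cong\PG(m_\nu-1,q^{j})$. Composing with an element of $\GL(m_\nu,S(T'))$, which preserves $(T')^{m_\nu}$ by the converse computation, we may assume these images are the coordinate points. Then $A_\nu$ is diagonal, $\operatorname{diag}(d_1,\dots,d_{m_\nu})$, and each entry satisfies $d_hT_\nu^\sigma=T'$. So $d_h/d_1\in S(T_\nu^\sigma)=S(T_\nu)$ and $T'=d_1T_\nu^\sigma$. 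Setting $a_\nu=d_1$, the block becomes $a_\nu$ times a matrix over $S(T_\nu)$. Undoing the composition keeps this form, because $a_\nu\GL(m_\nu,S)\cdot\GL(m_\nu,S)=a_\nu\GL(m_\nu,S)$, and $S(T')=S(T_\nu)$ since $T'=a_\nu T_\nu^\sigma$.

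The obstacle I expect is justifying that the coordinate points of $F'$ can be moved to the standard frame while preserving $(T')^{m_\nu}$. Any $m_\nu$ independent points of a subgeometry $\PG(m_\nu-1,q^{j})$ have representatives over $\F_{q^{j}}$, which gives the required element of $\GL(m_\nu,\F_{q^{j}})$. One more point needs care: the points $\overline\Phi\angles{e_h}$ are independent, since $A_\nu$ is invertible.
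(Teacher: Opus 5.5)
Your proof is correct. The converse direction coincides with the paper's. So does the first half of the forward direction: weight preservation, the isomorphism of the graphs $\Gamma$, the permutation $\pi$, block-diagonality of $A$, and $m'_{\pi(\nu)}=m_\nu$.

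You diverge at the main step, where the shape of each block $A_\nu$ is determined. You use the geometry of $F'_{\pi(\nu)}$: you move the images of the coordinate points to the standard coordinate points by some $G\in\GL(m_\nu,S(T'_{\pi(\nu)}))$. This is legitimate, because those images have $\F_{q^{j}}$-rational, hence $\F_{q^{j}}$-independent, representatives. Then $GA_\nu$ is diagonal and its entries can be compared.

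The paper instead reads the shape directly off the matrix entries. Applying $A_\nu=(c_{rs})$ to $xe_s$ with $x\in T_\nu^\sigma$ gives $c_{rs}T_\nu^\sigma\subseteq T'_{\pi(\nu)}$ for all $r,s$. By dimension, every nonzero entry then satisfies $c_{rs}T_\nu^\sigma=T'_{\pi(\nu)}$. Dividing by one fixed nonzero entry $a_\nu$ puts all entries in $S(T_\nu^\sigma)=S(T_\nu)$.

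The paper's route is shorter and purely algebraic. It needs neither the description of the fat points of a block as a canonical subgeometry over $S(T'_{\pi(\nu)})$ nor the normalization step. Applied to all entries of $A$, the same inclusion would even yield block-diagonality and $\pi$ without the incidence graph, since the block representatives in each product are pairwise scalar-inequivalent.

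Your route is longer and relies on more of the earlier structure theory. In exchange, it makes visible the geometric content of the normal form: $\GL(m_\nu,S(T_\nu))$ acts transitively on ordered $m_\nu$-tuples of independent points of the fat subgeometry $F_\nu$, so a block is determined up to its diagonal part.
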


\begin{proof}
Suppose first that $\Phi(U)=U'$.  Semilinear maps preserve point
weights, so the induced projective map takes $\cF(U)$ onto $\cF(U')$.
By Lemma~\ref{lem:incidence-components}, it maps connected components
to connected components and hence induces a bijection between their projective spans.
Thus $c=c'$, and there is a permutation $\pi$ such that
\[
 \Phi(V_\nu)=V'_{\pi(\nu)}.
\]
Since $V_\nu$ is the span of some base points, $V_\nu^\sigma=V_\nu$.
Therefore $A(V_\nu)=V'_{\pi(\nu)}$, and preservation of
dimension gives $m_\nu=m'_{\pi(\nu)}$.

Fix $\nu$ and write $C_\nu=(c_{rs})$ for the corresponding invertible
block of $A$.  Since $A(U^\sigma)=U'$, a vector supported in the
$s$th coordinate of this block, with entry in $T_\nu^\sigma$, is
mapped to a vector whose entries belong to $T'_{\pi(\nu)}$.  Hence
\[
 c_{rs}T_\nu^\sigma\subseteq T'_{\pi(\nu)}
 \qquad\text{for all }r,s.
\]
Choose a nonzero coefficient $a_\nu$ of $C_\nu$.  Both factors have
$\F_q$-dimension $i$, so this inclusion gives
\[
 T'_{\pi(\nu)}=a_\nu T_\nu^\sigma.
\]
It follows that
\[
 a_\nu^{-1}c_{rs}T_\nu^\sigma\subseteq T_\nu^\sigma,
\]
and thus every entry of $B_\nu=a_\nu^{-1}C_\nu$ belongs to
$S(T_\nu^\sigma)=S(T_\nu)$.  
The entries $c_{rs}$ of $B_\nu$ are elements of $S(T_\nu)$.
This proves
\eqref{eq:block-matrix-form} and the necessity of
\eqref{eq:equivalence-data}.

Conversely, suppose that \eqref{eq:equivalence-data} holds and choose
matrices as in \eqref{eq:block-matrix-form}.  Both $B_\nu$ and its
inverse have entries in $S(T_\nu)$, so
\[
 B_\nu(T_\nu^\sigma)^{m_\nu}
 =(T_\nu^\sigma)^{m_\nu}.
\]
Consequently,
\[
 a_\nu B_\nu(T_\nu^\sigma)^{m_\nu}
 =(a_\nu T_\nu^\sigma)^{m_\nu}
 =(T'_{\pi(\nu)})^{m'_{\pi(\nu)}}.
\]
Combining these invertible block maps with the permutation $\pi$
defines $A\in\GL(k,q^n)$ satisfying $A(U^\sigma)=U'$.  The resulting
semilinear map uses the same companion automorphism $\sigma$ on
every block.
\end{proof}

The linear stabilizer is obtained by taking $U'=U$ and
$\sigma=\mathrm{id}$.

\begin{corollary}\label{cor:linear-stabilizer}
Let $U=T_1^{m_1}\times\cdots\times T_c^{m_c}$ be an admissible block
product, and write
$S(T_\nu)=\F_{q^{j_\nu}}$.  Then
\[
 \Stab_{\GL(k,q^n)}(U)
 \cong\prod_{\nu=1}^c\GL(m_\nu,q^{j_\nu}),
\]
where the factors are embedded block diagonally.
\end{corollary}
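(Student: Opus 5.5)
The plan is to specialize \cref{thm:semilinear-equivalence} to $U'=U$ and $\sigma=\mathrm{id}$, and then identify the resulting group of matrices with the stated direct product.

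First I show the permutation $\pi$ is trivial. By \cref{thm:semilinear-equivalence}, any $A\in\Stab_{\GL(k,q^n)}(U)$ comes with a permutation $\pi$ and scalars $a_\nu$ such that $T_{\pi(\nu)}=a_\nu T_\nu$ and $m_{\pi(\nu)}=m_\nu$. The representatives $T_1,\ldots,T_c$ are pairwise scalar-inequivalent by construction. Condition (A2) of \cref{def:admissible} also rules out $T_\mu=\alpha T_\nu$ for $\mu\neq\nu$, since that would give an intersection of dimension $i\geq2$. Hence $\pi(\nu)=\nu$ for every $\nu$, and each $a_\nu$ satisfies $a_\nu T_\nu=T_\nu$, so $a_\nu\in S(T_\nu)^*$.

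It follows that $A$ is block diagonal with $\nu$th block $a_\nu B_\nu$, where $B_\nu\in\GL(m_\nu,S(T_\nu))$. Since $a_\nu\in S(T_\nu)^*$, the product $a_\nu B_\nu$ already lies in $\GL(m_\nu,q^{j_\nu})$, so the scalar can be absorbed. This gives the inclusion $\Stab(U)\subseteq\prod_\nu\GL(m_\nu,q^{j_\nu})$, embedded block diagonally. For the reverse inclusion I use the converse part of \cref{thm:semilinear-equivalence} with $\pi=\mathrm{id}$ and $a_\nu=1$: every block-diagonal matrix with blocks in $\GL(m_\nu,S(T_\nu))$ stabilizes $U$. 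A direct check also works, since $B(T_\nu^{m_\nu})\subseteq T_\nu^{m_\nu}$ because the entries of $B$ lie in $S(T_\nu)$, and equality holds by comparing dimensions.

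Finally, the block-diagonal embedding of $\prod_\nu\GL(m_\nu,q^{j_\nu})$ into $\GL(k,q^n)$ is an injective homomorphism, which gives the stated isomorphism. The only step needing care is the first: confirming that $\pi$ must be the identity, and that the scalars $a_\nu$ lie in the scalar stabilizers so that no extra scalar factor survives beyond the $\GL$ groups.
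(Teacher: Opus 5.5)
Your proposal is correct and follows the paper's proof: you specialize \cref{thm:semilinear-equivalence} to $U'=U$ and $\sigma=\mathrm{id}$, use pairwise scalar-inequivalence of the block representatives to force $\pi=\mathrm{id}$ and $a_\nu\in S(T_\nu)^*$, and invoke the converse part for the reverse inclusion. Your remark that condition (A2), together with $i\geq 2$, already implies the scalar-inequivalence is a small but welcome addition, since the paper simply takes it as part of the choice of representatives.
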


\begin{proof}
By \cref{thm:semilinear-equivalence}, since the representatives are
pairwise scalar-inequivalent, every block is fixed.  On the $\nu$th
block, the theorem gives a matrix $a_\nu B_\nu$ with
$T_\nu=a_\nu T_\nu$ and $B_\nu\in\GL(m_\nu,S(T_\nu))$.
Thus $a_\nu\in S(T_\nu)^*$, and the block matrix belongs to
$\GL(m_\nu,S(T_\nu))$.  Conversely, every such block-diagonal matrix
preserves $U$.
\end{proof}

\begin{remark}
These results describe the vector equivalence between admissible block products and calculate the stabilizers that preserve them. Each equivalence induces an equivalence of the associated weighted configurations $(L_U,w_U)$. The converse is neither asserted nor used here; the matrix-coefficient inclusions in the proof of \cref{thm:semilinear-equivalence} rely on $\Phi(U)=U'$ rather than merely on the preservation of point weights. The description of the projective equivalence between two (weighted or unweighted) linear sets associated with admissible block data is left as an open problem.
\end{remark}

\section{\texorpdfstring{Non-representable rank-two $q$-matroid interpretation}{Non-representable rank-two q-matroid interpretation}}\label{sec:q_mat}

The point-weight distribution of a linear set admits a natural interpretation in terms of the rank-one flats of a representable $q$-matroid, providing a direct bridge between the geometry of $(r,i)$-regular fat linear sets and $q$-matroid representability.

\begin{definition}
Let $E$ be a finite-dimensional $\F_q$-vector space and denote by
$\mathcal L(E)$ its lattice of $\F_q$-subspaces.  Recall that a \emph{$q$-matroid}
on $E$ is a pair $\mathcal M=(E,\rho)$, where $\rho:\mathcal L(E)\longrightarrow \mathbb Z_{\geq 0}$ is a \emph{rank function} satisfying, for all $A,B\leq_{\fq} E$, the following three properties, called \emph{ boundedness}, \emph{monotonicity}, and \emph{submodularity}, respectively:
\begin{enumerate}[label=\textup{(R\arabic*)}]
\item $0\leq \rho(A)\leq \dim_{\F_q}A$;
\item if $A\leq_{\fq} B$, then $\rho(A)\leq\rho(B)$;
\item $\rho(A+B)+\rho(A\cap B)\leq \rho(A)+\rho(B)$.

\end{enumerate}
The \emph{rank of $\mathcal M$} is $\rho(E)$.  
We say that $\mathcal M$ is \emph{loopless} if every one-dimensional subspace of $E$ has rank one.
\end{definition}

Recall that a subspace $F\leq_{\fq} E$ is a \emph{flat} of the $q$-matroid $\mathcal M=(E,\rho)$ if $\rho(F+\langle x\rangle_{\F_q})>\rho(F)$ for every $x\in E\setminus F$. Equivalently, $F$ is maximal, with respect to inclusion, among the
subspaces having rank $\rho(F)$.  In particular, a rank-one flat is
precisely a maximal subspace of rank one. 
We next recall the notion of cyclicity.  A subspace $C\leq_{\fq} E$ is \emph{cyclic} if $\rho(D)=\rho(C)$ for every $\F_q$-hyperplane $D$ of $C$.  A subspace which is both
cyclic and a flat is called a 
\emph{cyclic flat}.  We denote by
$\mathcal Z(\mathcal M)$ the set of cyclic flats of $\mathcal M$. We refer to \cite{JP18,AB24} for the basic theory of flats and cyclic flats of $q$-matroids.

Let $m,n$ be positive integers\footnote{In the context of rank-metric code theory, it is customary to frame the following discussion with reference to the field of cardinality $q^m$. However, to maintain consistency with the content of this paper, we will use the field $\fqn$.}. An \emph{$[m,k]_{q^n/q}$-system} (or more concisely \emph{$q$-system}) is an $m$-dimensional $\F_q$-subspace $U\leq_{\fq} \F_{q^n}^{\,k}$ such that $\langle U\rangle_{\F_{q^n}}=\F_{q^n}^{\,k}$.
For $X\leq_{\F_q}U$, put
\[
\rho_{U}(X)
=
\dim_{\F_{q^n}}
\langle X\rangle_{\F_{q^n}}.
\]
A $q$-matroid $\mathcal M=(E,\rho)$ of rank $k$ is
 \emph{$\F_{q^n}$-representable} if there exist an
$[m,k]_{q^n/q}$-system $U$ and an $\F_q$-linear isomorphism $\psi:E\longrightarrow U$, such that
\begin{equation}
\label{eq:qmat-representation}
\rho(X)=\rho_{U}(\psi(X))
\qquad
(X\leq_{\fq} E).
\end{equation}
We simply say that $\mathcal M$ is  {representable} if it is
$\F_{q^n}$-representable for some $n$.  This is equivalent to the usual
definition in terms of a generator matrix of an $\F_{q^n}$-linear
rank-metric code; see \cite{JP18,AB24,AJNZ26}.

Recall that a point $P\in\PG(k-1,q^n)$, identified with its corresponding
one-dimensional $\F_{q^n}$-subspace of $\F_{q^n}^k$, has weight $w_{ U}(P)=\dim_{\F_q}( U\cap P)$. We record the following standard observation relating the rank-one flats of a represented \(q\)-matroid to the point weights of its representing \(q\)-system; see, e.g., \cite{AB24,ABF25} for the \(q\)-system and rank-two geometric viewpoint.

\begin{proposition}
\label{prop:qmatroid-dictionary}
Let $\mathcal M=(E,\rho)$ be a loopless $q$-matroid of rank two and suppose
that $U\leq\F_{q^n}^2$ is a $q$-system representing $\mathcal M$ via the isomorphism $\psi$.
For $P\in\PG(1,q^n)$, set $F_P=\psi^{-1}(U\cap P)$.
Then the nonzero spaces $F_P$ are precisely the rank-one flats of
$\mathcal M$, that is, the maximal subspaces of rank one.  Moreover,
\begin{equation}
\label{eq:qmatroid-point-weight}
\dim_{\F_q}F_P=w_{U}(P).
\end{equation}
\end{proposition}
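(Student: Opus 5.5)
The plan is to describe the flats of $\mathcal M$ directly through $\psi$ and then read off their dimensions. Since $\psi$ is an $\F_q$-linear isomorphism preserving rank via \eqref{eq:qmat-representation}, the flats of $\mathcal M$ correspond exactly to the flats of the $q$-matroid $(U,\rho_U)$ under $\psi$. It is therefore enough to identify the maximal subspaces $X\leq_{\fq}U$ with $\rho_U(X)=1$ and to compute their dimensions.

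The first step characterizes the subspaces of rank at most one. For $0\neq X\leq U$, the condition $\rho_U(X)=\dim_{\fqn}\langle X\rangle_{\fqn}=1$ holds precisely when $\langle X\rangle_{\fqn}=P$ for a single point $P\in\PG(1,q^n)$, that is, when $X\subseteq U\cap P$. This point is unique, because two distinct points meet only in $0$. Consequently every subspace of rank one lies in $U\cap P$ for a unique $P$. Moreover, whenever $U\cap P\neq 0$, the space $U\cap P$ itself has rank exactly one.

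The second step is maximality. If $U\cap P\neq0$ and $x\in U\setminus P$, then $\langle (U\cap P)+\langle x\rangle_{\fq}\rangle_{\fqn}$ contains $P$ together with a vector outside $P$. Its $\fqn$-dimension is therefore $2$, so the rank strictly increases. This shows that $U\cap P$ is a flat of rank one. Conversely, let $F$ be a rank-one flat. By the first step $F\subseteq U\cap P$ for the point $P=\langle F\rangle_{\fqn}$. Since $U\cap P$ also has rank one, the maximality of $F$ forces $F=U\cap P$. Pulling back by $\psi^{-1}$, the rank-one flats of $\mathcal M$ are exactly the nonzero $F_P$.

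Finally, $\psi$ preserves $\F_q$-dimension, so $\dim_{\fq}F_P=\dim_{\fq}(U\cap P)=w_U(P)$, which is \eqref{eq:qmatroid-point-weight}. Looplessness only guarantees that every nonzero vector lies in some rank-one flat, and it is not needed for the argument. No step is a real obstacle; the only point requiring care is checking that flatness, defined by strict rank increase, transfers correctly along $\psi$ via \eqref{eq:qmat-representation}.
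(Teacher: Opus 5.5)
Your proposal is correct and follows essentially the same route as the paper: both show that each nonzero $U\cap P$ has rank one and that every rank-one subspace lies in $U\cap P$ for a unique point $P$, conclude that the nonzero $F_P$ are exactly the maximal rank-one subspaces, and read off $\dim_{\F_q}F_P=w_U(P)$ via $\psi$. Your explicit strict-rank-increase check spells out the maximality step the paper leaves implicit, and your remark that looplessness is automatic for a $q$-system is also right.
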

\begin{proof}
If $F_P\neq 0$, then
$\psi(F_P)\subseteq P$,
and therefore $\rho(F_P)=1$.  

Conversely, if $0\neq X\leq_{\fq} E$ has rank
one, then
\[
\dim_{\F_{q^n}}
\langle\psi(X)\rangle_{\F_{q^n}}=1.
\]
Hence $\psi(X)$ is contained in a unique point $P$ of
$\PG(1,q^n)$, and consequently $X\leq_{\fq} F_P$. 
Thus, the nonzero $\fq$-subspaces of type $F_P$ are exactly the maximal rank-one subspaces; hence, the  rank-one flats.

Finally, $\psi$ restricts to an $\F_q$-linear isomorphism
$F_P\longrightarrow U\cap P$,
and \eqref{eq:qmatroid-point-weight} follows.
\end{proof}

Hence, for a rank-two represented $q$-matroid, the point-weight
distribution of its representing $q$-system is exactly the dimension
distribution of its rank-one flats.

Let
$E=\F_q^{\,2i}$, $i\geq2$,
and let
$\mathcal V=\{V_1,\ldots,V_s\}$
be an partial $i$-spread of $E$, that is,
\[
\dim_{\F_q}V_a=i,
\qquad
V_a\cap V_b=0
\quad(a\neq b).
\]
Complete $\mathcal V$ to a vector-space partition by adjoining all
one-dimensional subspaces that are not contained in one of the
$V_a$.  More precisely, put
\[
\mathcal P(\mathcal V)
=
\mathcal V
\cup
\left\{
L\leq_{\fq} E:
\dim_{\F_q}L=1,\,
L\nleq_{\fq} V_a\text{ for every }a
\right\}.
\]
Define
\begin{equation}
\label{eq:partial-spread-rank}
\rho_{\mathcal V}(X)=
\begin{cases}
0,&X=0,\\[1mm]
1,&0\neq X\leq_{\fq} B
   \text{ for some }B\in\mathcal P(\mathcal V),\\[1mm]
2,&\text{otherwise}.
\end{cases}
\end{equation}

\begin{lemma}
\label{lem:partial-spread-qmatroid}
The function $\rho_{\mathcal V}$ in
\eqref{eq:partial-spread-rank} is the rank function of a loopless
rank-two $q$-matroid, which we denote by
$\mathcal M_{\mathcal V}=(E,\rho_{\mathcal V})$.
\end{lemma}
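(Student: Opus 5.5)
We verify (R1)--(R3) directly from \eqref{eq:partial-spread-rank}, after a preliminary observation: every nonzero $X\leq_{\fq}E$ lies in at most one block of $\mathcal P(\mathcal V)$. The blocks of $\mathcal P(\mathcal V)$ pairwise meet trivially. Indeed, two distinct $V_a$ meet trivially by the partial spread hypothesis. A one-dimensional block $L$ is not contained in any $V_a$, so $L\cap V_a=0$. Two distinct one-dimensional blocks also meet trivially. Hence $\rho_{\mathcal V}$ is well defined.

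\emph{Boundedness and looplessness.} If $\dim X=1$, then either $X\leq V_a$ for some $a$, or $X$ is itself a block of $\mathcal P(\mathcal V)$. In both cases $\rho_{\mathcal V}(X)=1$, so the $q$-matroid is loopless. This also gives $\rho_{\mathcal V}(X)\leq\dim X$ whenever $\dim X\leq1$; for $\dim X\geq2$ the bound is trivial since $\rho_{\mathcal V}\leq 2$.

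\emph{Monotonicity.} Let $A\leq B$. If $A=0$ or $\rho_{\mathcal V}(B)=2$, there is nothing to prove. If $\rho_{\mathcal V}(B)=1$, then $B$ lies in a block, so $A$ does too, and $\rho_{\mathcal V}(A)\leq1$. The case $\rho_{\mathcal V}(B)=0$ forces $B=0=A$.

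\emph{Submodularity.} This is the main check, and we split into cases on $\rho_{\mathcal V}(A)+\rho_{\mathcal V}(B)$. The left-hand side is at most $2+\rho_{\mathcal V}(A\cap B)$.
\begin{itemize}
\item If $A=0$ or $B=0$, the inequality is an equality.
\item If both $A$ and $B$ have rank $2$, the inequality reads $\rho_{\mathcal V}(A+B)+\rho_{\mathcal V}(A\cap B)\leq4$, which always holds.
\item If $\rho_{\mathcal V}(A)=2$ and $\rho_{\mathcal V}(B)=1$, we need $\rho_{\mathcal V}(A\cap B)\leq1$. This follows by monotonicity, since $A\cap B\leq B$.
\item If $\rho_{\mathcal V}(A)=\rho_{\mathcal V}(B)=1$, let $A\leq B_1$ and $B\leq B_2$ with $B_1,B_2\in\mathcal P(\mathcal V)$. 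When $A\cap B\neq0$, we have $0\neq A\cap B\leq B_1\cap B_2$, and trivial intersection of distinct blocks forces $B_1=B_2$. Then $A+B\leq B_1$, so the left-hand side is $1+1\leq2$. When $A\cap B=0$, the left-hand side is $\rho_{\mathcal V}(A+B)\leq2$.
\end{itemize}
In every case (R3) holds.

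\emph{Rank two.} Finally $\rho_{\mathcal V}(E)=2$, because $E$ has dimension $2i$, which exceeds the dimension of every block. Hence $E$ lies in no block. The only genuinely used ingredient is the pairwise trivial intersection of blocks, which is the key step in the case $\rho_{\mathcal V}(A)=\rho_{\mathcal V}(B)=1$.
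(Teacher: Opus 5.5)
Your proof is correct and follows essentially the same route as the paper's: a direct verification of (R1)--(R3) with the same case split for submodularity, where the key ingredient in both is that distinct blocks of $\mathcal P(\mathcal V)$ meet trivially. You also explicitly check that $\rho_{\mathcal V}(E)=2$ (every block has dimension at most $i<2i$), a point the paper's proof of this lemma leaves implicit.
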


\begin{proof}
We verify the rank axioms.

Boundedness follows from the definition.  Indeed, if
$\rho_{\mathcal V}(X)=2$, then $X$ cannot be one-dimensional, since
every one-dimensional subspace belongs to a unique block of
$\mathcal P(\mathcal V)$.  Hence $\dim X\geq2$.

Monotonicity is immediate.  In fact, if $\rho_{\mathcal V}(X)=2$ and
$X\leq_{\fq} Y$, then $X$ is contained in no partition block, and therefore
neither is $Y$.

It remains to verify submodularity.  Cases involving the zero space
are immediate.  Suppose first that
$\rho_{\mathcal V}(X)=\rho_{\mathcal V}(Y)=1$.
Choose partition blocks $B_X,B_Y$ containing $X$ and $Y$, respectively.
If $B_X=B_Y$, then $X+Y\leq_{\fq} B_X$, and hence
\[
\rho_{\mathcal V}(X+Y)+
\rho_{\mathcal V}(X\cap Y)\leq2.
\]
If $B_X\neq B_Y$, then $B_X\cap B_Y=0$, so $X\cap Y=0$, while
$\rho_{\mathcal V}(X+Y)\leq2$.  Again the required inequality follows.

If $\rho_{\mathcal V}(X)=2$ and $\rho_{\mathcal V}(Y)=1$, then
$\rho_{\mathcal V}(X+Y)=2$ and
$\rho_{\mathcal V}(X\cap Y)\leq1$, giving
\[
\rho_{\mathcal V}(X+Y)+
\rho_{\mathcal V}(X\cap Y)\leq3
=
\rho_{\mathcal V}(X)+\rho_{\mathcal V}(Y).
\]
Finally, if both $X$ and $Y$ have rank two, then
\[
\rho_{\mathcal V}(X+Y)+
\rho_{\mathcal V}(X\cap Y)
\leq4
=
\rho_{\mathcal V}(X)+\rho_{\mathcal V}(Y).
\]
Thus $\rho_{\mathcal V}$ satisfies submodularity.  Since every
one-dimensional subspace has rank one, the resulting $q$-matroid is
loopless.
\end{proof}

We can determine the cyclic flats of  $\mathcal M_{\mathcal V}$.

\begin{proposition}
\label{prop:cyclic-flats-partial-spread}
The cyclic flats of $\mathcal M_{\mathcal V}$ are precisely
\begin{equation}
\label{eq:cyclic-flats-partial-spread}
\mathcal Z(\mathcal M_{\mathcal V})
=
\{0,V_1,\ldots,V_s,E\}.
\end{equation}
Their ranks are
\[
\rho_{\mathcal V}(0)=0,
\qquad
\rho_{\mathcal V}(V_a)=1
\quad(a=1,\ldots,s),
\qquad
\rho_{\mathcal V}(E)=2.
\]
\end{proposition}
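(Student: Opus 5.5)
The plan is to first determine all flats of $\mathcal M_{\mathcal V}$ and then test each for cyclicity. Rank-zero flats: since the $q$-matroid is loopless, the only rank-zero subspace is $0$. Hence $0$ is a flat, and it is trivially cyclic because it has no hyperplanes. Rank-two flats: the maximal subspace of rank two is $E$ itself, so $E$ is the unique rank-two flat. Rank-one flats: a nonzero subspace has rank one exactly when it lies in some block of $\mathcal P(\mathcal V)$. The maximal such subspaces are therefore the blocks themselves, namely the $V_a$ together with the one-dimensional subspaces $L$ not contained in any $V_a$. Maximality of each block holds because distinct blocks meet trivially, so no block is contained in another block.

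Next comes cyclicity. For $V_a$, with $i\geq2$, every hyperplane $D$ of $V_a$ has dimension $i-1\geq1$. Since $D\leq V_a$, we get $\rho_{\mathcal V}(D)=1=\rho_{\mathcal V}(V_a)$, so $V_a$ is cyclic. For a one-dimensional block $L$, its only hyperplane is $0$, which has rank $0\neq1$; hence $L$ is not cyclic.

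For $E$, I must show that every hyperplane $H$ of $E$ (of dimension $2i-1$) has rank two, that is, lies in no block. A one-dimensional block clearly cannot contain $H$, since $2i-1\geq3$. Containment in some $V_a$ is impossible because $\dim V_a=i<2i-1$. So $\rho_{\mathcal V}(H)=2$ and $E$ is cyclic. This is the only step needing any care, namely the dimension comparison $i<2i-1$, which uses $i\geq2$.

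Finally, the listed ranks come straight from the definition \eqref{eq:partial-spread-rank}. They also follow from the previous lemma, which establishes that the $q$-matroid has rank two, so $\rho_{\mathcal V}(E)=2$. I expect no real obstacle here. The main point is the careful identification of the rank-one flats as exactly the blocks of $\mathcal P(\mathcal V)$, and the exclusion of the one-dimensional blocks from cyclicity.
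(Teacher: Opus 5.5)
Your proof is correct and follows essentially the same route as the paper. You identify the flats rank by rank, with the rank-one flats being exactly the blocks of $\mathcal P(\mathcal V)$. You then check that each $V_a$ is cyclic, that the one-dimensional blocks are not, and that $E$ is cyclic because $i<2i-1$. You also make explicit the vacuous cyclicity of $0$, which the paper leaves implicit.
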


\begin{proof}
The rank-one flats are exactly the blocks of
$\mathcal P(\mathcal V)$.  Indeed, if $B$ is such a block and
$x\nleq B$ is a one-dimensional subspace, then $B+x$ is contained in
no partition block and therefore has rank two.

Each $V_a$ is cyclic.  Since $i\geq2$, every hyperplane
$D<V_a$ is nonzero and satisfies
\[
\rho_{\mathcal V}(D)=1=\rho_{\mathcal V}(V_a).
\]
On the other hand, a one-dimensional block
$L\in\mathcal P(\mathcal V)\setminus\mathcal V$ is not cyclic, since
its unique hyperplane is $0$ and
\[
\rho_{\mathcal V}(0)=0<1=\rho_{\mathcal V}(L).
\]

The whole space $E$ is cyclic as well.  Indeed, every hyperplane
$H<E$ has dimension $2i-1>i$, and hence cannot be contained in a
partition block.  Consequently,
\[
\rho_{\mathcal V}(H)=2=\rho_{\mathcal V}(E).
\]

Finally, the only rank-zero flat is $0$, and the only rank-two flat is
$E$: a proper subspace of rank two can always be enlarged without
increasing its rank.  This proves \eqref{eq:cyclic-flats-partial-spread}.
\end{proof}

The next result establishes a connection between the representability of a $q$-matroid and the structure of  a regulus in a projective space in some cases. In a projective space of dimension $2i-1$, a regulus of $(i-1)$-dimensional subspaces is a set of pairwise skew $(i-1)$-subspaces, whose union is a Segre variety $S_{1,i-1}$. See \cite{BB64,Seg64} for more information on reguli of lines and \cite{LZ14} for a geometric characterization of the Segre varieties.

%definition of a regulus of $(i-1)$-dimensional subspaces.
\begin{definition}
    A \emph{regulus of $(i-1)$-dimensional subspaces} in a projective space $\PG(n,q)$ is a set $\mathcal R$ of $q+1$ pairwise skew $(i-1)$-dimensional subspaces, such that for every point $P$ on a subspace of $\mathcal R$, there is a line that contains $P$ and intersects all elements of $\mathcal R$.
\end{definition}
In the case $i=2$, we obtain the standard definition of a regulus of lines in a three-dimensional finite projective space.

\begin{theorem}
\label{thm:partial-spread-rigidity}
Let $i\geq2$, let
$
\mathcal V=\{V_1,\ldots,V_s\}
$
be an partial $i$-spread of $E=\F_q^{2i}$, and assume
$
s<q^i+1$.
If $\mathcal M_{\mathcal V}$ is representable over a finite extension
of $\F_q$, then
\begin{equation}
\label{eq:qmatroid-s-values}
s\leq2
\qquad\text{or}\qquad
s=q+1.
\end{equation}

Moreover, 
if $s=q+1$, then there is an $\fq$-linear isomorphism $\varphi:V_1\longrightarrow V_s$ for which
\begin{equation}\label{eq:qmatroid-new}
\mathcal V=\set{W_\lambda:\lambda \in\fq}\cup\set{V_s},\quad\text{where }\ W_\lambda=\set{x+\lambda\varphi(x):x\in V_1}.
\end{equation}
Furthermore, $\mathcal V$ defines a regulus of $(i-1)$-dimensional subspaces in $\PG(2i-1,q)$.
\end{theorem}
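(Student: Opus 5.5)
Suppose $\mathcal M_{\mathcal V}$ is represented by a $q$-system $U\leq\F_{q^n}^2$ of dimension $2i$ via $\psi$. The plan is to turn the representation into a regular fat linear set and then apply \cref{thm:line-rigidity}. By \cref{prop:qmatroid-dictionary}, the nonzero spaces $F_P=\psi^{-1}(U\cap P)$ are exactly the rank-one flats of $\mathcal M_{\mathcal V}$, with $\dim F_P=w_U(P)$. By the proof of \cref{prop:cyclic-flats-partial-spread}, these rank-one flats are exactly the blocks of $\mathcal P(\mathcal V)$: the $s$ spaces $V_a$ of dimension $i$ and the one-dimensional spaces not contained in any $V_a$. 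Hence $L_U$ has exactly $s$ points of weight $i$, and all its other points have weight one. We still need a weight-one point. Since $s<q^i+1$, the $V_a$ do not cover $E$: they contain $s(q^i-1)+1<q^{2i}$ vectors. So some one-dimensional block exists, and $L_U$ is $(s,i)$-regular fat of rank $2i$ in $\PG(1,q^n)$. \Cref{thm:line-rigidity} then gives $s\le 2$ or $s=q+1$.

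Now let $s=q+1$. By \cref{prop:qplusone}, after an element of $\GL(2,q^n)$ (which changes $\psi$ but not the matroid) we may take $U=T\times T$ with $S(T)=\F_q$. The fat points are then $P_\lambda$ for $\lambda\in\F_q$, together with $P_\infty$. For $\lambda\in\F_q$ we have $U\cap P_\lambda=\{(x,\lambda x):x\in T\}$, and $U\cap P_\infty=\{0\}\times T$. Relabel the spread so that $V_s=\psi^{-1}(\{0\}\times T)$ and $V_1=\psi^{-1}(T\times\{0\})$. Define $\varphi:V_1\to V_s$ by
\[
\varphi(\psi^{-1}(x,0))=\psi^{-1}(0,x),
\]
which is an $\F_q$-linear isomorphism. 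By linearity of $\psi^{-1}$, for every $\lambda\in\F_q$,
\[
\psi^{-1}(x,\lambda x)=\psi^{-1}(x,0)+\lambda\varphi(\psi^{-1}(x,0)).
\]
This shows $\psi^{-1}(U\cap P_\lambda)=W_\lambda$, which gives \eqref{eq:qmatroid-new}.

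The regulus property follows from this form. Take a point $\langle y\rangle$ with $y\in W_\lambda$, and write $y=x+\lambda\varphi(x)$ for some $x\in V_1$; if instead $y\in V_s$, write $y=\varphi(x)$. The $2$-space $\langle x,\varphi(x)\rangle_{\F_q}$ is a projective line. It meets every $W_\mu$ in $\langle x+\mu\varphi(x)\rangle$ and meets $V_s$ in $\langle\varphi(x)\rangle$, and it contains the given point. The $q+1$ spaces are pairwise skew because they form a partial spread. So $\mathcal V$ is a regulus in the sense of the definition.

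The main obstacle is the bookkeeping that identifies the rank-one flats with the fat and weight-one points, including the existence of a weight-one point. The final statement hinges on the equivalence and on the choice of labelling, which I would handle by pulling everything back through $\psi$ as above.
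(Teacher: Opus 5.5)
Your proposal is correct and follows the same route as the paper. You identify the points of $L_U$ with the blocks of $\mathcal P(\mathcal V)$ via \cref{prop:qmatroid-dictionary}, use $s<q^i+1$ to get a weight-one point, and invoke \cref{thm:line-rigidity}; for $s=q+1$ you normalize to $U=T\times T$, pull back $\{(x,\lambda x):x\in T\}$ and $\{0\}\times T$, and check the transversals $\langle x,\varphi(x)\rangle_{\F_q}$.

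The one step to tighten is ``relabel the spread'', since the statement fixes $V_1$ and $V_s$. The paper avoids relabelling by choosing the projectivity so that the fat points of $V_1$ and $V_s$ go to $\langle(1,0)\rangle_{\F_{q^n}}$ and $\langle(0,1)\rangle_{\F_{q^n}}$. You can get the same effect by composing with a suitable element of $\GL(2,q)$, which preserves $T\times T$ and acts transitively on ordered pairs of distinct points of the subline $\PG(1,q)$.
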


\begin{proof}
Assume that $\mathcal M_{\mathcal V}$ is
$\F_{q^n}$-representable, and let $U\leq\F_{q^n}^{\,2}$ be a representing $q$-system.  Thus there is an $\F_q$-linear
isomorphism
$\psi:E\longrightarrow U$
such that
\[
\rho_{\mathcal V}(X)
=
\dim_{\F_{q^n}}
\langle\psi(X)\rangle_{\F_{q^n}}
\qquad
(X\leq_{\fq} E).
\]
By \cref{prop:qmatroid-dictionary}, 
the nonzero subspaces of type $F_P=\psi^{-1}(U\cap P)$ are the rank-one flats
of $\mathcal M_{\mathcal V}$.
By construction, the latter are exactly
the blocks of $\mathcal P(\mathcal V)$.  Consequently, the spaces
$V_1,\ldots,V_s$ correspond to $s$ projective points of weight $i$,
whereas every other point of $L_{U}$ has weight one.

Since
$
s<q^i+1$,
the spaces $V_1,\ldots,V_s$ do not cover all nonzero vectors of $E$:
indeed,
$
s(q^i-1)<q^{2i}-1$.
Hence $\mathcal P(\mathcal V)$ contains at least one
one-dimensional block.  It follows that $L_{U}$ contains a
point of weight one.  Therefore $L_{U}$ is an
$(s,i)$-regular fat linear set of rank $2i$ in
$\PG(1,q^n)$.

Applying \cref{thm:line-rigidity} gives $s\leq2$ or $s=q+1$, which proves \eqref{eq:qmatroid-s-values}.

 Now assume $s=q+1$.
 A projectivity exists mapping any three distinct points of weight $i$ to the standard frame of $\PG(1,q^n)$.
As in \cite[Proposition~6.1]{SZZ26} and the proof therein, and \cref{prop:qplusone}, we can require, for a related $\fqn$-linear isomorphism $\kappa_0\in\GL(2,q^n)$, that
\[
\kappa_0(\psi(V_1))=T\times 0,\quad \kappa_0(\psi(V_s))=0\times T,\quad\kappa_0(U)=T\times T
\]
for a suitable $i$-dimensional $\fq$-subspace $T$ of $\fqn$.
Since $\kappa_0$ preserves weights,  also $T\times T$  is a representing $q$-system via
\[
\kappa=\kappa_0\circ\psi:E\longrightarrow T\times T.
\]
Indeed, for every $X\le_{\fq}E$ we have
\[
\rho_\mathcal V(X)=\rho_U(\psi(X))=\dim_{\fqn}\angles{\psi(X)}_{\fqn}=\dim_{\fqn}\angles{\kappa(X)}_{\fqn}=\rho_{T\times T}\left(\kappa(X)\right).
\]
Define $\sigma(x,y)=(y,x)$.
We have
\[
\varphi=\kappa^{-1}\circ\sigma\circ\kappa\in\GL(E,q),
\]
and $\varphi(V_1)=V_s$.
By \cref{prop:qplusone}, the fat points in $L_{T\times T}$, distinct from $\angles{(0,1)}_{\fqn}$, are associated with the following subspaces of $T\times T$:
\[
\set{y+\lambda\sigma(y):y\in\kappa(V_1)},\quad\lambda\in \fq.
\]
Now, to obtain \cref{eq:qmatroid-new}, take the preimage through $\kappa$ of such spaces, and put $y=\kappa(x)$.

Finally, for every $0\neq x\in V_1$, the projective line
\[
\ell_x=\PG\bigl(\langle x,\varphi(x)\rangle_{\fq}\bigr)
\]
meets each $\PG(W_\lambda)$ at $\langle x+\lambda\varphi(x)\rangle_{\fq}$
and meets $\PG(V_s)$ at $\langle\varphi(x)\rangle_{\fq}$.
Every point of the subspaces defined by $\mathcal V$ lies on one
of these lines. Since these subspaces are pairwise disjoint,
they form a regulus by definition.
\end{proof}

We immediately obtain an infinite family of nonrepresentable $q$-matroids.

\begin{corollary}
\label{cor:qmatroid-nonrep}
For every $i\geq2$ and every integer $s$ satisfying
\[
3\leq s<q^i+1,
\qquad
(q,s)\neq (2,3),
\]
there exists a loopless rank-two $q$-matroid on $\F_q^{2i}$ that is
not representable over any finite extension of $\F_q$ having exactly $s$ rank-one cyclic flats.
\end{corollary}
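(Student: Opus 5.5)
Fix $i\geq2$ and $s$ with $3\leq s<q^i+1$ and $(q,s)\neq(2,3)$. The plan is to exhibit a suitable partial $i$-spread $\mathcal V=\{V_1,\ldots,V_s\}$ of $E=\F_q^{2i}$ and apply \cref{thm:partial-spread-rigidity} to $\mathcal M_{\mathcal V}$. Two facts come for free.
\begin{itemize}
\item By \cref{lem:partial-spread-qmatroid}, $\mathcal M_{\mathcal V}$ is a loopless rank-two $q$-matroid.
\item By \cref{prop:cyclic-flats-partial-spread}, its rank-one cyclic flats are exactly $V_1,\ldots,V_s$, so there are exactly $s$ of them.
\end{itemize}
Partial spreads of every size $s\leq q^i+1$ exist: take a Desarguesian spread of $E$, obtained from the $q^i+1$ points of $\PG(1,q^i)$ with $E\cong\F_{q^i}^2$, and keep any $s$ of its members. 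So only non-representability needs work.

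\textbf{Case $s\neq q+1$.} Since $s\geq3$ and $s<q^i+1$, the first part of \cref{thm:partial-spread-rigidity} forces $s\leq2$ or $s=q+1$ whenever $\mathcal M_{\mathcal V}$ is representable. Neither holds, so any partial spread of size $s$ works.

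\textbf{Case $s=q+1$, with $q>2$ by hypothesis.} Here I must pick $\mathcal V$ that does \emph{not} form a regulus. By the second part of \cref{thm:partial-spread-rigidity}, a representable $\mathcal M_{\mathcal V}$ with $s=q+1$ has the form \eqref{eq:qmatroid-new}. Then every $1$-dimensional subspace $\langle x\rangle$ of $V_1$ lies in a $2$-dimensional space $\langle x,\varphi(x)\rangle$ meeting all $q+1$ members. So it suffices to construct $q+1$ pairwise trivially intersecting $i$-subspaces with no such transversal through some point of some member. Concretely:
\begin{itemize}
\item Write $E=V_1\oplus V_s$ and take $V_s$ and $V_1$ as two spread members.
\item Write the others as graphs $W_\lambda=\{x+\varphi_\lambda(x)\}$, $\lambda$ ranging over $q-1$ indices, with $\varphi_\lambda\colon V_1\to V_s$ linear.
\item Pairwise disjointness amounts to each $\varphi_\lambda$ and each difference $\varphi_\lambda-\varphi_\mu$ being invertible.
\item The regulus structure \eqref{eq:qmatroid-new} would force the $\varphi_\lambda$ (after identifying $V_s$ with $V_1$ via one of them) to be $\F_q$-scalar multiples of a single map.
\end{itemize}

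To break this, I would take $V_1=V_s=\F_{q^i}$ and the Desarguesian spread with maps $x\mapsto\mu x$, $\mu\in\F_{q^i}^*$. I then choose the $q-1$ nonzero slopes $\mu$ not all in one coset $\beta\F_q^*$. Since $q>2$, I can pick $1$ and some $\mu\notin\F_q$, among others, and the differences remain invertible automatically. If the result were a regulus, then with $\varphi=\mathrm{id}$ for $W_1$, each $\varphi_\lambda$ would equal $\lambda\,\mathrm{id}$. The hardest step is to make this rigorous, since the isomorphism $\varphi$ in \cref{thm:partial-spread-rigidity} is not canonically attached to $V_1$ and $V_s$. I would argue intrinsically: a regulus is determined by any three of its members, via the transversal lines through points of $V_1$. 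Through $\langle x\rangle\leq V_1$, the unique $2$-space meeting $V_s$ and $W_1$ is $\langle x,x\rangle$ in $V_1\oplus V_s$. This space meets $W_\mu$ only if $\mu x\in\F_q x$, i.e.\ $\mu\in\F_q$, which is a contradiction. Hence $\mathcal V$ is not a regulus, and $\mathcal M_{\mathcal V}$ is non-representable.

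The case $q=2$, $s=3$ is excluded because three pairwise skew members always form a regulus.
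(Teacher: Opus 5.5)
Your proposal is correct and follows essentially the same route as the paper. For $s\neq q+1$ you take any $s$ members of the Desarguesian spread; for $s=q+1$ you take a subfamily containing a slope outside $\F_q$, show it has no transversal line through a point of one member, and exclude it via the regulus conclusion of \cref{thm:partial-spread-rigidity}. The only difference is cosmetic: you pin down the candidate transversal $\langle(x,0),(0,x)\rangle_{\F_q}$ using the members of slopes $0$, $\infty$, $1$, whereas the paper omits $V_\infty$ and uses slopes $0$, $1$, $\mu$ with $\mu\in\F_q\setminus\{0,1\}$.
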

\begin{proof}
Identify $E$ with $K^2$, where $K=\F_{q^i}$. For each $a\in K$, put
\[
V_a=\{(x,ax):x\in K\},
\qquad
V_\infty=\{(0,x):x\in K\}.
\]
These subspaces form an $i$-spread
$\mathcal S=\{V_a:a\in K\}\cup\{V_\infty\}$ of $E$.
If $2<s<q^2+1$ and $s\neq q+1$, choose any $s$ members of $\mathcal S$ and
denote the resulting partial spread by $\mathcal V$.
By \cref{thm:partial-spread-rigidity}, the associated
$q$-matroid $\mathcal M_{\mathcal V}$ is not representable.
It remains to consider $s=q+1$. The excluded case $(q,s)=(2,3)$
allows us to assume $q>2$. Choose $\alpha\in K\setminus\F_q$ and set
\[
\mathcal V=\{V_a:a\in\F_q\}\cup\{V_\alpha\}.
\]
Thus $\mathcal V$ is obtained from the standard regulus by replacing
$V_\infty$ with $V_\alpha$. We show that $\mathcal V$ does not
define a regulus.
Choose $\mu\in\F_q\setminus\{0,1\}$. A projective line meeting
$\PG(V_0)$ and $\PG(V_1)$ has underlying vector space
$L=\langle(x,0),(y,y)\rangle_{\F_q}$
for some $x,y\in K^*$.
If it also meets $\PG(V_\mu)$, there exist $a,b\in\F_q^*$ such that
\[
by=\mu(ax+by).
\]
Hence $x$ and $y$ are proportional over $\F_q$, and consequently
$L=\langle(x,0),(0,x)\rangle_{\F_q}$.
Such a line cannot meet $\PG(V_\alpha)$, since a nonzero vector
$(cx,dx)\in V_\alpha$, with $c,d\in\F_q$, would force
$c\neq0$ and $\alpha=d/c\in\F_q$.
Therefore no line meets every member of $\mathcal V$, so
$\mathcal V$ does not define a regulus.
By \cref{thm:partial-spread-rigidity},
$\mathcal M_{\mathcal V}$ is again not representable.
In both cases, \cref{lem:partial-spread-qmatroid,prop:cyclic-flats-partial-spread}
show that $\mathcal M_{\mathcal V}$ is loopless of rank two
and has exactly $s$ rank-one cyclic flats.
\end{proof}

\begin{remark}
\label{rem:known-nonrep-qmatroids}
Nonrepresentable $q$-matroids have been previously constructed in
\cite{CJ24,GLJ22,GLJ25}; moreover, Degen and K\"uhne \cite{DK26} proved that
asymptotically almost all $q$-matroids are nonrepresentable.
In particular, \cite{CJ24} exhibits the smallest nonrepresentable
$q$-matroid, a rank-two $q$-matroid on $\F_2^4$ with four
two-dimensional circuits.  This example is recovered from our
construction by taking $q=2$, $i=2$, and $s=4$.
Thus \cref{thm:partial-spread-rigidity,cor:qmatroid-nonrep} provide
a uniform representability obstruction for this class of rank-two
$q$-matroids over arbitrary $q$ and $i$, in terms of the geometry
and number of their rank-one cyclic flats.
\end{remark}

\section*{Declaration of Generative AI and AI-assisted Technologies in the Writing Process}
During the preparation of this work, the authors utilized generative AI tools solely for the initial drafting phase. The authors reviewed, substantially revised, and finalized all content, ensuring rigorous oversight of all mathematical derivations, literature citations, and intellectual claims. The authors assume full responsibility for the contents of the publication.

%\section*{Funding}
%{}

\section*{Acknowledgements}
The authors would like to thank Gianira N. Alfarano for her useful comments and suggestions. 

\medskip

The research was partially supported by the Italian National Group for Algebraic and Geometric Structures and their Applications (GNSAGA - INdAM). The research of the second and the fourth author was supported by the project SID \emph{Linear Algebra methods in Combinatorics and Industrial Engineering: Coding Theory, Graphs and their interest in Assembly Line Problems} CUP C33C25001110005 of Dipartimento di Tecnica e Gestione dei Sistemi Industriali of the Università degli Studi di Padova.


\begin{thebibliography}{AJNZ26}

\bibitem[AB24]{AB24}
G.~N. Alfarano and E. Byrne,
\emph{The cyclic flats of a $q$-matroid},
J. Algebraic Combin. \textbf{60} (2024), 97--126.

\bibitem[ABF25]{ABF25}
G.~N. Alfarano, E. Byrne, and A. Fulcher,
\emph{The free product of $q$-matroids},
Electron. J. Combin. \textbf{32} (2025), no.~3, Paper P3.52.

\bibitem[AJNZ26]{AJNZ26}
G.~N. Alfarano, R. Jurrius, A. Neri, and F. Zullo,
\emph{Representability of the direct sum of uniform $q$-matroids},
Combinatorial Theory \textbf{6} (2026), no.~1, Paper 4.

\bibitem[BL00]{BL00}
A. Blokhuis and M. Lavrauw,
\emph{Scattered spaces with respect to a spread in $\PG(n,q)$},
Geom. Dedicata \textbf{81} (2000), 231--243.

\bibitem[BB64]{BB64}
R.~H. Bruck and R.~C. Bose,
\emph{The construction of translation planes from projective spaces},
J. Algebra \textbf{1} (1964), 85--102.

\bibitem[BSZ17]{BSZ17}
C. Bachoc, O. Serra, and G. Z\'{e}mor,
\emph{An analogue of Sidon sets for subspaces of finite fields},
Math. Proc. Cambridge Philos. Soc. \textbf{163} (2017), 225--239.

\bibitem[CJ24]{CJ24}
M. Ceria and R. Jurrius,
\emph{The direct sum of $q$-matroids},
J. Algebraic Combin. \textbf{59} (2024), 291--330.

\bibitem[CMP24]{CMP24}
B. Csajb\'{o}k, G. Marino, and V. Pepe,
\emph{On the maximum field of linearity of linear sets},
Bull. Lond. Math. Soc. \textbf{56} (2024), 3300--3315.

\bibitem[DK26]{DK26}
S.~Degen and L.~K\"uhne,
\emph{Most $q$-matroids are not representable},
to appear in Combinatorial Theory;
arXiv:2408.06795v2, 2025.

\bibitem[FS06]{FS06}
S.~Z.~Fancsali and P.~Sziklai,
\emph{About maximal partial $2$-spreads in $\PG(3m-1,q)$},
Innov. Incidence Geom. \textbf{4} (2006), 89--102.

\bibitem[GLJ22]{GLJ22}
H. Gluesing-Luerssen and B. Jany,
\emph{$q$-polymatroids and their relation to rank-metric codes},
J. Algebraic Combin. \textbf{56} (2022), no.~3, 725--753.

\bibitem[GLJ25]{GLJ25}
H. Gluesing-Luerssen and B. Jany,
\emph{Representability of the direct sum of $q$-matroids},
J. Algebraic Combin. \textbf{61} (2025), no.~4, Paper~51.



\bibitem[JP18]{JP18}
R. Jurrius and R. Pellikaan,
\emph{Defining the $q$-analogue of a matroid},
Electron. J. Combin. \textbf{25} (2018), no.~3, Paper P3.2.

\bibitem[LZ14]{LZ14}
M.~Lavrauw and C.~Zanella,
\emph{Segre embeddings and finite semifields},
Finite Fields Appl. \textbf{25} (2014), 8--18.

\bibitem[LZ15]{LZ15}
M.~Lavrauw and C.~Zanella,
\emph{Subgeometries and linear sets on a projective line},
Finite Fields Appl. \textbf{34} (2015), 95--106.

\bibitem[LN97]{LN97}
R. Lidl and H. Niederreiter,
\emph{Finite Fields}, 2nd ed.,
Encyclopedia of Mathematics and its Applications, vol.~20,
Cambridge University Press, Cambridge, 1997.


\bibitem[Lun84]{Lun84}
G. Lunardon, \emph{Fibrazioni planari e sottovarietà algebriche della varietà di Grassmann}, Geom.\ Dedicata \textbf16 (1984), 291--313.

\bibitem[NPSZ22]{NPSZ22}
V.~Napolitano, O.~Polverino, P.~Santonastaso, and F.~Zullo,
\emph{Linear sets on the projective line with complementary weights},
Discrete Math. \textbf{345} (2022), no.~7, Paper 112890.

\bibitem[NZ26]{NZ26}
A.~Neri and F.~Zullo,
\emph{Rank-metric codes over arbitrary fields: Bounds and constructions},
Art Discrete Appl. Math. \textbf{9} (2026), no.~2, Paper P2.09.

\bibitem[Pol10]{Pol10}
O. Polverino,
\emph{Linear sets in finite projective spaces},
Discrete Math. \textbf{310} (2010), 3096--3107.

\bibitem[PZ20]{PZ20}
O.~Polverino and F.~Zullo,
\emph{Connections between scattered linear sets and MRD-codes},
Bull. Inst. Combin. Appl. \textbf{89} (2020), 46--74.

\bibitem[RRT18]{RRT18}
R.~M. Roth, N. Raviv, and I. Tamo,
\emph{Construction of Sidon spaces with applications to coding},
IEEE Trans. Inform. Theory \textbf{64} (2018), 4412--4422.

\bibitem[Seg64]{Seg64}
B. Segre,
\emph{Teoria di Galois, fibrazioni proiettive e geometrie non desarguesiane},
Ann. Math Pura Appl. \textbf{64} (1964), no. 4, 1--76.

\bibitem[SZZ26]{SZZ26}
V. Smaldore, C. Zanella, and F. Zullo,
\emph{Regular fat linear sets},
Des. Codes Cryptogr.,
\textbf{94} (2026), no.~9, Paper 191.

\bibitem[Zul23]{Zul23}
F. Zullo,
\emph{Multi-orbit cyclic subspace codes and linear sets},
Finite Fields Appl. \textbf{87} (2023), Paper 102153.

\end{thebibliography}
\end{document}